\newtheorem{thm}{Theorem}[section]
\newtheorem{theorem}[thm]{Theorem}
\newtheorem{lemma}[thm]{Lemma}
\newtheorem{proposition}[thm]{Proposition}
\newtheorem{remark}[thm]{Remark}
\newcommand{\lam}{{\lambda}}
\newcommand{\argmin}{\mathrm{argmin}\,}
\newcommand{\dom}{\mathrm{dom}\,}
\newcommand{\tx}{\tilde x}
\newcommand{\ty}{\tilde y}
\newcommand{\mb}[1]{\mbox{\boldmath $#1$}}
\newenvironment{myenv}{\begin{adjustwidth}{1.5cm}{}}{\end{adjustwidth}}
\begin{document}
\title{A FISTA-Type First Order Algorithm on Composite Optimization Problems that is Adaptable to the Convex Situation}

\author{{\bf{Chee-Khian Sim}}}

\date{May 25, 2020}
\maketitle
\begin{abstract}
In this note, we propose a FISTA-type first order algorithm, VAR-FISTA, to solve a composite optimization problem.  A distinctive feature of VAR-FISTA is its ability to exploit the convexity of the function in the problem, resulting in an improved iteration complexity when the function is convex compared to when it is nonconvex.  The iteration complexity result for the convex and nonconvex case obtained in the note are compatible to the best known in the literature so far. 

\vspace{15pt}

\noindent {\bf{Keywords.}} Fast iterative shrinkage thresholding algorithm (FISTA); Composite optimization problem; Iteration complexity.
\end{abstract}

\section{Introduction}\label{sec:Intro}

Using first order methods is the preferred approach to solve large scale optimization problems that arise in application areas such as machine learning.  Fast iterative shrinkage thresholding algorithm (FISTA), an efficient first order method, is proposed in \cite{beck2009fast} to solve composite optimization problems when the functions involved are convex; see also \cite{ag_nesterov83,nesterov2012gradient,Nest05-1}.  Recently, there are interests in the study of first order algorithms, such as FISTA variants, to solve composite optimization problems with nonconvex functions.  These works include \cite{carmon2018accelerated,nonconv_lan16, GhadimiLanZhang,KongMeloMonteiro,Li_Lin2015,
Lin_Zhou_Liang_Varshney, jliang2018double, Liang2,Liang,tseng2008accmet, Yao_et.al.}. 

\vspace{10pt}

\noindent In this note, we propose a FISTA-type first order algorithm, VAR-FISTA, to solve composite optimization problems.  This algorithm is inspired by the algorithm ADAP-NC-FISTA in \cite{Liang}.  The algorithm  in this note is designed in such a way that when the functions involved in the composite optimization problem are convex, it is able to exploit the convexity of the problem leading to an iteration complexity of $\mathcal{O}((1/\hat{\rho})^{2/3})$, while an iteration complexity of $\mathcal{O}(1/\hat{\rho}^2)$ is achieved in the nonconvex case.  These complexity results are the best known in the literature so far.  The contribution of this note is twofold.  First, the algorithm requires only one resolvent evaluation in an iteration, unlike the algorithms in \cite{GhadimiLanZhang,Li_Lin2015}.  Second, other than information on function and gradient values, no other data  information, such as Lipschitz constant or lower curvature, are required from the problem, as in \cite{nonconv_lan16,Yao_et.al.}, for the algorithm to run.  It should finally be  noted that the algorithm, ADAP-NC-FISTA, in \cite{Liang} also shares these same features as our algorithm, but in \cite{Liang}, the iteration complexity of 
$\mathcal{O}((1/\hat{\rho})^{2/3})$ when the functions in the composite optimization problem are convex cannot be directly established.


\section{A Composite Optimization Problem}\label{sec:CNO} 

We consider the following composite optimization problem:
\begin{eqnarray}\label{CNO}
\min_{u \in \Re^n} \phi(u):= f(u) + h(u),
\end{eqnarray}
where $f$ is continuously differentiable, can be nonconvex on $\Omega$, 
\begin{eqnarray}\label{lipschtiz}
\| \nabla f(u_1) - \nabla f(u_2) \| \leq M \|u_1 - u_2\|,\ \forall\ u_1, u_2 \in \Omega,
\end{eqnarray}
with $M > 0$ and $\Omega$ is a closed convex set in $\Re^n$, that is, the gradient of $f$ is Lipschitz continuous on $\Omega$, and $h$ is a proper lower semi-continuous convex function, which can be nonsmooth, with ${\mbox{dom}}\ h \subset \Re^n$ being closed and bounded.  We assume that ${\mbox{dom}}\ h \subseteq \Omega$.  Let $\overline{M} (> 0)$ be the smallest $M$ satisfying (\ref{lipschtiz}).  There exists $m \geq 0$ such that
\begin{eqnarray}\label{lowercurvature}
-\frac{m}{2} \| u_1 - u_2 \|^2 \leq f(u_1) - \ell_f(u_1;u_2), \forall\ u_1, u_2 \in \Omega,
\end{eqnarray}
where
\begin{eqnarray}\label{linearization}
\ell_f(u_1;u_2) := f(u_2) + \langle \nabla f(u_2), u_1 - u_2 \rangle,
\end{eqnarray}
since by (\ref{lipschtiz}),
\begin{eqnarray}\label{ineq:lipschitzconsequence}
| f(u_1) - l_f(u_1; u_2) | \leq \frac{M}{2} \| u_1 - u_2 \|, \ \forall\ u_1, u_2 \in \Omega.
\end{eqnarray}
Hence, an $m$ that satisfies (\ref{lowercurvature}) is $\overline{M}$.  Let $\underline{m} \geq 0$ be the smallest $m \geq 0$ such that (\ref{lowercurvature}) holds.  We have $0 \leq \underline{m} \leq \overline{M}$.  Observe that if $\underline{m} = 0$, then by (\ref{lowercurvature}), $f$ is convex on $\Omega$, while if $\underline{m} > 0$, then $f$ is nonconvex on $\Omega$.  

\vspace{10pt}

\noindent Let us denote $y^\ast \in {\rm{dom}}\ h$ to be an optimal solution to Problem (\ref{CNO}), which exists since ${\rm{dom}}\ h$ is closed and bounded.

\vspace{10pt}

\noindent A necessary condition for ${u} \in {\mbox{dom}}\ h$ to be a local minimum of Problem (\ref{CNO}) is $0 \in \nabla f({u}) + \partial h({u})$.  Motivated by this condition, we have the notion of an $\hat{\rho}$-approximate solution of Problem (\ref{CNO}), which is a pair $(\hat{u}, \hat{v})$ which satisfies
\begin{eqnarray}\label{approximatesolution}
\hat{v} \in \nabla f(\hat{u}) + \partial h(\hat{u}),\quad \| \hat{v} \| \leq \hat{\rho},
\end{eqnarray}
where $\hat{\rho} > 0$ is a given tolerance.

\section{A FISTA-Type Algorithm on Problem (\ref{CNO})}\label{sec:FISTA}

\noindent In the following, we propose a variant of FISTA, which we call VAR-FISTA, to find an $\hat{\rho}$-approximate solution to Problem (\ref{CNO}), for a given tolerence $\hat{\rho} > 0$.  This algorithm is inspired by the algorithm ADAP-FISTA in \cite{Liang}.

\noindent\rule[0.5ex]{1\columnwidth}{1pt}

{\centerline{\bf{{VAR-FISTA}}}}

\noindent\rule[0.5ex]{1\columnwidth}{1pt}

\noindent {\bf{Initialization}}: Let $\xi_0=0$, $\lam_0>0$, $\theta> 1$, $0 < \gamma < 1$, tolerance $\hat \rho > 0$ and 
	initial point $y_0 \in \dom h$, and set $y_0^{\rm{min}}=x_0=y_0$, $A_0 = 12$, $L_{0} = 0$.
	
\vspace{10pt}

\noindent {\bf{$\mb{k^{\rm{th}}}$ Iteration}} ($k = 1, 2, \ldots$): 
\begin{itemize}
	\item[$k1$.] Set $\lam=\lam_{k-1}$, $ \xi=\xi_{k-1}$  and compute	
	\begin{equation}\label{eq:all ak}
	a_{k-1} = \frac{1+ \sqrt{1 + 4 A_{k-1}}}{2},\quad
	A_{k} = A_{k-1} + a_{k-1}, \quad
	\tx_k  =  \frac{A_{k-1}}{A_{k}}y_{k-1} + \frac{a_{k-1}}{A_{k}}x_{k-1};
	\end{equation}
	\item[$k2$.] compute
	\begin{align}
	& \tau = \frac{2\xi \lam}{a_{k-1}}, \nonumber\\
	& {y} = {\mbox{argmin}}_u \left\{ l_f(u; \tilde{x}_k) + h(u) + \frac{1 + \tau}{2 \lambda} \| u - \tilde{x}_k \|^2 \right\}, \label{def:yk} \\
    & {\color{black}{U}} {\color{black}{=  \frac{2 \left[   f(y) - \ell_f(y; \tx_k) \right]} {\|y-\tx_k\|^2}; }}\label{def:U} \\
    & \tilde{y}^{\min} = \argmin \left \{  \phi(\ty) \ ;\ \ty = y_{k-1}^{\rm{min}},y  \right\}, \label{eq:ymin} \\
	& L = \max\left\lbrace \frac{2 [\ell_f(y_{k-1}; \tx_k)-f(y_{k-1}) ]} {\|y_{k-1}-\tx_k\|^2},\frac{2 [\ell_f(\tilde{y}^{\min}; \tx_i)-f(\tilde{y}^{\min}) ]} {\|\tilde{y}^{\min}-\tx_i\|^2}, L_{k-1}, 0\ ; \ i = 1, \ldots, k \right\rbrace; \label{def:L}
	\end{align}
	\item[$k3$.] If $U \lambda > \gamma$ or $\xi \lambda_{k-1} < L \lambda + \tau$ or $\xi \lambda_{i-1} < L \lambda_i + \tau_i$ for some $i = 1, \ldots, k-1$, go to step $k2$ with $(\xi, \lambda)$ given by
\begin{eqnarray*}	
	(\xi, \lambda) = {\rm{UPDATE}}(\xi, \lambda, \lambda_1, \ldots, \lambda_{k-1}, \tau, \tau_1, \ldots, \tau_{k-1}, L, U, \theta, \gamma);
\end{eqnarray*}
\noindent else set
	$ \tau_{k}=\tau $, $y_{k}^{\min} = \tilde{y}^{\min}$, $y_{k}=y$, $ \lam_{k}=\lam $, $U_k = U$, $ L_k=L $ and $ \xi_{k}=\xi $;
	 \item[$k4$.] compute 
	\begin{align}
	& x_{k}= P_\Omega\left(  \frac{(1 + \tau_k)A_{k}}{a_{k-1}(\tau_k a_{k-1} + 1)} y_{k} - \frac{A_{k-1}}{a_{k-1}(\tau_k a_{k-1} + 1)}y_{k-1} \right), \label{def:xk} \\
	& v_{k} =  \frac{1+\tau_k}{\lam_{k}} (\tilde{x}_k - y_{k}) + \nabla f(y_{k}) - \nabla f(\tilde{x}_k). \label{def:vk+1}
	\end{align}
\end{itemize}

\noindent {\bf{Termination}}: If at the end of the $k^{th}$ iteration, $\|v_{k} \| \le \hat \rho$, then output  $(\hat y,\hat v)=(y_{k},v_{k})$, and {\bf{exit}}.

\noindent \rule[0.5ex]{1\columnwidth}{1pt}

\noindent We describe the subroutine in VAR-FISTA in the following:

\noindent\rule[0.5ex]{1\columnwidth}{1pt}

\noindent {$\mb{(\xi, \lambda) = {\rm{UPDATE}}(\xi, \lambda, \lambda_1, \ldots, \lambda_{k-1}, \tau, \tau_1, \ldots, \tau_{k-1}, L, U, \theta, \gamma)}$}:

\vspace{10pt}

\noindent if $U \lam > \gamma$ then set
	\begin{equation}\label{eq:lamupdate}
	\lambda \leftarrow  \min\{ \lam/\theta, {\color{black}{\gamma/U}} \};
	\end{equation}
	if 
	 \begin{equation}\label{ineq:check}
	 \xi \lam_{k-1}< L \lam +\tau\ {\mbox{or}}\ \xi \lambda_{i-1} < L \lambda_{i} + \tau_i\ {\mbox{for\ some}}\ i = 1, \ldots, k-1
	 \end{equation}
	 then 
	 \begin{myenv}
	 if $\xi = 0$, set
	 \begin{equation}\label{eq:xiupdate0}
	 \xi \leftarrow 1;
	 \end{equation}
	 else set
	 \begin{equation}\label{eq:xiupdate}  
	\xi \leftarrow 2 \xi;
	\end{equation}
	\end{myenv}

\noindent \rule[0.5ex]{1\columnwidth}{1pt}

\noindent In the above algorithm, VAR-FISTA, steps $k2$ and $k3$ can be performed more than once in an iteration since if the conditions in step $k3$ are not satisfied, then step $k2$ needs to be performed again with an updated $(\xi ,\lambda)$ obtained from the subroutine in step $k3$.  The conditions in step $k3$ are then checked again.  This will continue until the conditions are satisfied.  It should be noted however that the total number of times this occurs in an iteration is bounded.  In fact, if $N_0$ is the number of iterations taken by the algorithm before termination, then the total number of executions of steps $k2$ and $k3$ is bounded by $N_0 + \max \left\{ \frac{\log (\lambda_0/\underline{\lambda})}{\log \theta}, \frac{ \log \overline{\xi}}{\log 2}, 0 \right\}$. 

\vspace{10pt}

\noindent There is no particular reason for setting $A_0$ to be $12$ in the above algorithm.  We do this for the sake of convenience.  $A_0$ can be set to any positive number without affecting the results in this paper.

\vspace{10pt}

\noindent Note that in the above algorithm, for all $k \geq 1$, $y_k^{\rm{min}} \in {\rm{dom}}\ h$ and  $x_k, \tilde{x}_k \in \Omega$.  

\vspace{10pt}

\noindent We remark that for every $k \geq 1$, we have $y_k \in {\rm{dom}}\ h$ and $v_k \in \nabla f(y_k) + \partial h(y_k)$.  Hence, upon termination of the algorithm, we obtain an $\hat{\rho}$-approximate solution, $(\hat{y},\hat{v})$, of Problem (\ref{CNO}).

\vspace{10pt}


\noindent Also, we remark that $\lambda_k$ in the algorithm can be viewed as an estimation of the reciprocal of $\overline{M}$, while $\xi_k$ is an estimation of $\underline{m}$.  Hence, when $f$ is convex, in which case $\underline{m} = 0$, $\xi_k = 0$, which also implies that $\tau_k = 0$ for all $k \geq 1$.  The algorithm then reduces to FISTA with constant stepsize \cite{beck2009fast} on Problem (\ref{CNO}) when we set $\lambda_k = 1/\overline{M}$ for all $k \geq 0$.

\vspace{10pt}

\noindent Note that $\{a_k\}$ and $\{A_k\}$ in (\ref{eq:all ak}) are related by
\begin{eqnarray}\label{Akak}
A_{k+1} = a_k^2.
\end{eqnarray}

\noindent Furthermore, we observe that by defining for $k \geq 1$
\begin{align}
& \tilde{\gamma}_k(u) := l_f(u; \tilde{x}_k) + h(u) + \frac{\tau_k}{2\lambda_k} \|u - \tilde{x}_k \|^2, \ u \in {\rm{dom}}\ h, \label{def:tildegamma} \\
& \gamma_k(u) := \tilde{\gamma}_k(y_k) + \frac{1}{\lambda_k} \langle \tilde{x}_k - y_k, u - y_k \rangle + \frac{\tau_k}{2 \lambda_k} \| u - y_k \|^2, \ u \in \Omega,  \label{def:gamma}
\end{align}
it is easy to check that $x_k$ given in (\ref{def:xk}) is the unique optimal solution to the following optimization problem.
\begin{eqnarray}\label{optimizationproblem}
\min_{u \in \Omega} a_{k-1}\gamma_k(u) + \frac{1}{2 \lambda_k} \| u - {x}_{k-1} \|^2.
\end{eqnarray}
In the definition of $\gamma_k$ in (\ref{def:gamma}), we note that aside from the quadratic term, $\gamma_k$ is the ``linearization" of $\tilde{\gamma}_k$ at $u = y_k$.

\section{Iteration Complexity Results for VAR-FISTA}\label{sec:complexity}

In this section, we derive iteration complexity results as stated in Theorem \ref{thm:main} to find an $\hat{\rho}$-approximate solution to Problem (\ref{CNO}) using VAR-FISTA.  

\vspace{10pt}

\noindent First, we define
\begin{eqnarray}\label{diameter}
D_h := \sup_{u_1,u_2\in {\rm{dom}}\ h} \| u_1 - u_2 \|.
\end{eqnarray}
Note that $D_h$ is finite since ${\rm{dom}}\ h$ is bounded.

\vspace{10pt}

\noindent We need the following results on $\{a_k\}$ and $\{A_k\}$ in deriving these iteration complexity results.  The proof of the lemma is given in the appendix.
 \begin{lemma}\label{estimates}
	For every $ k\ge 1 $, the sequences $ \{a_k\} $ and $ \{A_k\} $ given in \eqref{eq:all ak} satisfy
	\[
	\frac{k}{2} \leq a_{k-1} \leq 4k, \quad \sum_{i=1}^{k}A_{i}\ge\frac{k^3}{12}, \quad \frac{\sum_{i=1}^{k}a_{i-1}}{\sum_{i=1}^{k}A_{i}}\le \frac{4}{k}.
	\]
\end{lemma}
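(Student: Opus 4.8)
The backbone of the argument is the identity \eqref{Akak}, which after reindexing reads $A_k = a_{k-1}^2$, together with the telescoping relation $a_{k-1} = A_k - A_{k-1}$ coming from the second equation in \eqref{eq:all ak}. The plan is to first extract one-step comparison bounds from the closed form $a_k = (1+\sqrt{1+4a_{k-1}^2})/2$. Squaring, the inequality $\sqrt{1+4a_{k-1}^2}\ge 2a_{k-1}$ gives $a_k \ge a_{k-1}+\tfrac12$, while $(1+2a_{k-1})^2 = 1+4a_{k-1}+4a_{k-1}^2 \ge 1+4a_{k-1}^2$ gives $a_k \le a_{k-1}+1$. Since $A_0 = 12$ forces $a_0 = 4$, summing these from the base point yields $\tfrac{k+7}{2}\le a_{k-1}\le k+3$ for every $k\ge 1$, which in particular gives the stated $\tfrac{k}{2}\le a_{k-1}\le 4k$ with room to spare.

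For the second estimate I would simply insert the lower bound into $A_i = a_{i-1}^2$, obtaining $A_i \ge i^2/4$, and then sum: $\sum_{i=1}^{k}A_i \ge \tfrac14\sum_{i=1}^{k}i^2 = \tfrac{1}{24}k(k+1)(2k+1)\ge \tfrac{k^3}{12}$, where the last step uses $(k+1)(2k+1)\ge 2k^2$.

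The third estimate is where the real work lies, and it is the step I expect to be the main obstacle. The telescoping relation gives $\sum_{i=1}^{k}a_{i-1} = A_k - A_0 \le A_k$, so it suffices to prove the amortized bound $k A_k \le 4\sum_{i=1}^{k}A_i$. The naive route---bounding the numerator $A_k = a_{k-1}^2$ by the crude $a_{k-1}\le 4k$ (or even $a_{k-1}\le k+3$) and the denominator by the second estimate---fails, because squaring a linear upper bound that is off by a constant factor from the true growth $a_{k-1}\approx k/2$ loses exactly the factor that decides the constant $4$. I would therefore prove $kA_k \le 4\sum_{i=1}^{k}A_i$ by induction on $k$. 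The cases $k\le 4$ are immediate from $kA_k \le 4A_k \le 4\sum_{i=1}^{k}A_i$. For the step from $k$ to $k+1$ (with $k\ge 4$), the target $(k+1)A_{k+1}\le 4\sum_{i=1}^{k+1}A_i$ reduces, via the inductive hypothesis $4\sum_{i=1}^{k}A_i \ge kA_k$, to $(k-3)A_{k+1}\le kA_k$; and using the exact recursion $a_k^2 = a_{k-1}^2 + a_k$ this is equivalent to
\[
(k-3)\,a_k \le 3\,a_{k-1}^2.
\]

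To close this last inequality I would sharpen the one-step upper bound to $a_k \le a_{k-1}+\tfrac12+\tfrac{1}{8a_{k-1}}$ (applying $\sqrt{1+x}\le 1+x/2$ with $x = 1/(4a_{k-1}^2)$), and combine it with the sharp lower bound $a_{k-1}\ge \tfrac{k+7}{2}$ from the first estimate, which allows me to replace the factor $k-3$ by $2a_{k-1}-10$. The resulting inequality is a comparison between quadratics in $a_{k-1}$ that holds with a wide margin whenever $a_{k-1}\ge 5$, i.e. for $k\ge 4$, so the induction goes through. Finally, combining $\sum_{i=1}^{k}a_{i-1}\le A_k$ with the established $A_k \le \tfrac{4}{k}\sum_{i=1}^{k}A_i$ gives the claimed bound. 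The crux, and the part requiring the extra care, is precisely this refinement: the loose bounds quoted in the statement are adequate for the first two estimates but must be tightened (or replaced by the amortized induction above) to obtain the constant $4$ in the third.
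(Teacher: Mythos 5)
Your proof is correct, and while the first two estimates match the paper, your handling of the third is genuinely different. For the first estimate the paper telescopes the one-step bounds $\sqrt{A_{k-1}}+\tfrac12 \le \sqrt{A_k} \le \sqrt{A_{k-1}}+1$, which, since $A_k = a_{k-1}^2$, are exactly your bounds $a_{k-1}+\tfrac12 \le a_k \le a_{k-1}+1$; the second estimate is word-for-word the same. For the third estimate the paper applies Cauchy--Schwarz, $\bigl(\sum_{i=1}^k a_{i-1}\bigr)^2 \le k\sum_{i=1}^k a_{i-1}^2$, to reduce the claim to $\frac{\sum_{i=1}^k a_{i-1}}{\sum_{i=1}^k A_i} \le \frac{k}{\sum_{i=1}^k a_{i-1}} = \frac{k}{A_k - A_0}$, and then needs only a lower bound on $A_k - A_0$ of order $k^2/4$, which the telescoped bound $\sqrt{A_k}\ge \sqrt{A_0}+k/2$ supplies; you instead prove the amortized inequality $kA_k \le 4\sum_{i=1}^k A_i$ by induction, reducing the inductive step to $(k-3)a_k \le 3a_{k-1}^2$. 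Both routes are valid: the Cauchy--Schwarz route is shorter, while yours is more elementary and makes explicit where the constant $4$ comes from. Two remarks. First, your final ``sharpening'' $a_k \le a_{k-1}+\tfrac12+\tfrac{1}{8a_{k-1}}$ is unnecessary: with $k-3 \le 2a_{k-1}-10$, the crude bound $a_k \le a_{k-1}+1$ already gives $(k-3)a_k \le (2a_{k-1}-10)(a_{k-1}+1) = 2a_{k-1}^2 - 8a_{k-1} - 10 < 3a_{k-1}^2$, so the induction closes without it. Second, your instinct that the constant $4$ is the delicate point is vindicated by the paper itself: its displayed chain contains the step $\frac{k}{A_k - A_0} \le \frac{k}{a_{k-1}^2}$, which is backwards as written (since $A_k - A_0 = a_{k-1}^2 - A_0 < a_{k-1}^2$); the intended repair is precisely the sharper telescoped lower bound $A_k - A_0 \ge k\sqrt{A_0} + k^2/4 \ge k^2/4$, i.e.\ the same refinement your argument builds in from the start.
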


\noindent In the following lemma, we put together properties of $\tau_k, y_k^{\rm{min}}, \lambda_k, U_k, L_k$ and $\xi_k$ in VAR-FISTA.  These results are useful in our analysis later.
\begin{lemma}\label{observation}
The following statements hold for VAR-FISTA:
	\begin{itemize}
		\item[(a)]
		$\{\lambda_k\} $ is positive, non-increasing; $\{\xi_k\} $ and $\{ L_k \}$ are non-negative, non-decreasing;
		\item[(b)] for every $k \ge 1$,
		\begin{align*}
		& U_k \leq \overline{M}, \quad L_k\le \underline{m}, \quad  \tau_k = \frac{2 \xi_{k}\lambda_{k}}{a_{k-1}}, \quad U_k\lambda_{k} \le \gamma, \\
		& \xi_{k}\lam_{i-1} \ge L_k \lambda_{i} + \tau_i \geq L_i\lam_{i}+\tau_i\ge0,\ i = 1, \ldots, k, \\
		& y_{k}^{\min} =\argmin \left\{  \phi(\tilde{y})\ ;\ \tilde{y} \in \{ y_0,\ldots,y_{k} \}  \right\};
		\end{align*}
		\item[(c)] for every $k \ge 0$, $ \lam_k\ge \underline \lam :=\min\{\gamma/(\theta \overline{M}), \lam_0\} $, $ \xi_k\le  \max\{4\underline{m}, 1 \} $;  furthermore, if $f$ is convex, then $\xi_k = 0$ for every $k \geq 0$.  
	\end{itemize}
\end{lemma}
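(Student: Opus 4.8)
The plan is to verify the three parts essentially in the order listed, reading off (a) and the bookkeeping identities of (b) directly from the update rules and then isolating the two quantitative bounds of (c) as the only nontrivial work; the whole argument is an induction on $k$. Within an iteration $\lambda$ is only ever replaced by $\min\{\lambda/\theta,\gamma/U\}$ in \eqref{eq:lamupdate}, and since the $\lambda$-branch is entered only when $U\lambda>\gamma>0$ (forcing $U>0$), both candidates are positive and no larger than the incoming $\lambda$; hence $\{\lambda_k\}$ is positive and non-increasing. Likewise $\xi$ is only ever raised (from $0$ to $1$ in \eqref{eq:xiupdate0} or doubled in \eqref{eq:xiupdate}) and $L$ is defined in \eqref{def:L} as a maximum containing $L_{k-1}$ and $0$, so $\{\xi_k\}$ and $\{L_k\}$ are non-negative and non-decreasing with $\xi_0=L_0=0$. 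The identity $\tau_k=2\xi_k\lambda_k/a_{k-1}$ is the definition of $\tau$ in step $k2$ evaluated at the accepted triple, and $U_k\lambda_k\le\gamma$ is exactly the negation of the clause $U\lambda>\gamma$ that acceptance in step $k3$ rules out.

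For the remaining items of (b): substituting $y=y_k$ into \eqref{def:U} and applying \eqref{ineq:lipschitzconsequence} with $M=\overline{M}$ gives $U_k\le\overline{M}$, while $L_k\le\underline{m}$ follows by induction, since every quotient in the maximum of \eqref{def:L} is bounded above by $\underline{m}$ through the lower-curvature inequality \eqref{lowercurvature}, and $L_{k-1}\le\underline{m}$, $0\le\underline{m}$ both hold. The identity for $y_k^{\min}$ is a one-line induction on the recursion \eqref{eq:ymin}. For the chain $\xi_k\lambda_{i-1}\ge L_k\lambda_i+\tau_i\ge L_i\lambda_i+\tau_i\ge0$ I would again negate the acceptance test: acceptance guarantees $\xi\lambda_{k-1}\ge L\lambda+\tau$ and $\xi\lambda_{i-1}\ge L\lambda_i+\tau_i$ for $i=1,\dots,k-1$, which is the first inequality after writing $\lambda_k=\lambda$, $\tau_k=\tau$, $L_k=L$, $\xi_k=\xi$; the second is $L_k\ge L_i$ from (a) times $\lambda_i>0$, and the third uses $L_i\ge0$, $\lambda_i>0$ and $\tau_i\ge0$.

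For the bound $\lambda_k\ge\underline{\lambda}$ in (c), I would show that each $\lambda$-update in \eqref{eq:lamupdate} leaves $\lambda\ge\gamma/(\theta\overline{M})$. The branch is entered only when $U\lambda>\gamma$, so, using $U\le\overline{M}$, the incoming $\lambda$ satisfies $\lambda>\gamma/U\ge\gamma/\overline{M}$, whence $\lambda/\theta>\gamma/(\theta\overline{M})$; also $\gamma/U\ge\gamma/\overline{M}\ge\gamma/(\theta\overline{M})$. Thus $\min\{\lambda/\theta,\gamma/U\}\ge\gamma/(\theta\overline{M})$, and since $\lambda$ is otherwise left at $\lambda_0$ we get $\lambda_k\ge\min\{\gamma/(\theta\overline{M}),\lambda_0\}=\underline{\lambda}$.

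The hard part will be $\xi_k\le\max\{4\underline{m},1\}$, because the doubling of $\xi$ is governed by the clause \eqref{ineq:check}, which couples in all the stored pairs $(\lambda_i,\tau_i)$. My plan is to prove the sufficiency statement \emph{if $\xi\ge2\underline{m}$ then no condition in \eqref{ineq:check} can hold}, so that $\xi$ is never doubled once it reaches $2\underline{m}$. For the $i=k$ clause I would use $a_{k-1}=\sqrt{A_k}\ge4$ (from \eqref{Akak} and $A_0=12$) with $\lambda\le\lambda_{k-1}$ to get $\lambda_{k-1}-2\lambda/a_{k-1}\ge\lambda_{k-1}/2$, reducing $\xi\lambda_{k-1}\ge L\lambda+\tau$ (with $\tau=2\xi\lambda/a_{k-1}$) to $\xi\ge2L$, which holds as $L\le\underline{m}$. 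For each $i<k$ clause I would bound the stored term $\tau_i=2\xi_i\lambda_i/a_{i-1}\le\xi_i\lambda_{i-1}/2\le\xi\lambda_{i-1}/2$ (using $a_{i-1}\ge4$, $\lambda_i\le\lambda_{i-1}$ and the monotonicity $\xi_i\le\xi$) and $L\lambda_i\le\underline{m}\lambda_{i-1}$, so $\xi\lambda_{i-1}\ge L\lambda_i+\tau_i$ again reduces to $\xi\ge2\underline{m}$. Consequently, before any doubling one has $\xi<2\underline{m}$, so the doubled value stays below $4\underline{m}$, and the promotion $0\mapsto1$ either occurs only when $\underline{m}>0$ or yields $1$; an induction then gives $\xi_k\le\max\{4\underline{m},1\}$ for all $k$. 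Finally, when $f$ is convex, $\underline{m}=0$ forces $L_k\equiv0$ by (b), so with $\xi_0=0$ and $\tau\equiv0$ the clause \eqref{ineq:check} is never triggered, and induction yields $\xi_k=0$ for every $k$.
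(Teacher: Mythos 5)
Your proposal is correct and follows essentially the same route as the paper's proof: parts (a) and (b) are read off the update rules and the negation of the acceptance test in step $k3$, the bound $\lambda_k\ge\underline{\lambda}$ comes from $U\le\overline{M}$ controlling the update \eqref{eq:lamupdate}, and the bound $\xi_k\le\max\{4\underline{m},1\}$ comes from showing \eqref{ineq:check} cannot hold once $\xi\ge 2\underline{m}$ (using $a_{i-1}\ge 4$, $L\le\underline{m}$, and the monotonicity of $\lambda$ and $\xi$), which is exactly the paper's inequality chain. The only difference is presentational: you phrase both halves of (c) as invariants preserved by every update, whereas the paper argues by contradiction, tracing the offending $\lambda_k$ or $\xi_k$ back to its last update — the same estimates in either formulation.
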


\begin{proof}
	(a)  The first statement follows from $\lambda_0 > 0$, the assumption that $\theta>1$ and the fact that the update procedure for $\lam$ in step $k3$ of VAR-FISTA
	either leaves $\lam$ unchanged
	or strictly decreases $\lam$ according to the update formula (\ref{eq:lamupdate}).
	That $\{ \xi_k \}$ is non-negative, non-decreasing is obvious in view of (\ref{eq:xiupdate0}) and (\ref{eq:xiupdate}) in step $k3$ of the algorithm, while $\{ L_k \}$ is non-negative, non-decreasing hold due to (\ref{def:L}) in step $k2$ of the algorithm.

\vspace{5pt}

\noindent (b) Since $\overline{M} > 0$ and $\underline{m} \geq 0$ satisfies (\ref{ineq:lipschitzconsequence}) and (\ref{lowercurvature}) respectively, it follows that every quantity $U$ (resp., $ L $) computed in step $k2$ of VAR-FISTA, and hence $U_k$ (resp., $ L_k $), is bounded above by $\overline{M}$ (resp., $\underline{m}$).
	The other conclusions follow immediately from (a) and the definitions of $ \tau_k $, $ y^{\min}_{k}$, $y_{k}$, $\lambda_{k}$, $U_k$, $ L_k $ and $ \xi_{k} $ in step $k3$ of VAR-FISTA. 

\vspace{5pt}

\noindent (c) For contradiction, assume that $ \lam_k<\underline \lam $ for some $ k\ge 0 $.
	Then, since $\lam_k< \lam_0$, $\lam_k$ has been obtained from a pair {\color{black}{$(\lam, U)$}} through the update formula (\ref{eq:lamupdate}) {\color{black}{and we also have $U > 0$}}.
	Since {\color{black}{$\overline{M} \ge U > 0$}} and $\lam_k < \gamma/(\theta M)$, it follows that {\color{black}{$\gamma/U \ge \gamma/\overline{M} > \lam_k$}}. Hence, it follows from (\ref{eq:lamupdate}) that $\lam_k= \lam/\theta$.
	On the other hand,  noting  that  step $k3$ in VAR-FISTA implies that $\lam$ is no longer reduced whenever $ \lam \le \gamma/\overline{M} $,
	we then conclude that $\lam > \gamma/\overline{M}$, and hence that $\lam_k = \lam/\theta > \gamma/(\theta \overline{M})$. Since the latter conclusion contradicts our initial assumption,  the first result in 
	statement (c) follows.  To show the second result in statement (c), for contradiction, assume that $\xi_k > \max \{ 4{\underline{m}}, 1 \}$ for some $k \geq 0$.  Since $\xi_k > 1$, we have $k \geq 1$, and we also have $\xi_k = 2 \xi$, where $\xi$ satisfies $\xi \lambda_{k-1} < L\lambda + \tau$ or $\xi \lambda_{i-1} < L \lambda_{i} + \tau_i$ for some $i = 1, \ldots, k-1$, according to (\ref{ineq:check}).   By $L \leq {\overline{m}}$ from (\ref{lowercurvature}) and (\ref{def:L}), definition of $\tau$ and $\tau_i$, $a_i \geq a_0 = 4$,  $\lambda \leq \lambda_{k-1}$, $\lambda_{i} \leq \lambda_{i-1}$ and $\xi \geq \xi_{i}, i = 1, \ldots, k-1$, we have $L \lambda + \tau \leq {\underline{m}} \lambda_{k-1} + (\lambda_{k-1} \xi)/2$ or $L \lambda_{i} + \tau_i \leq {\overline{m}} \lambda_{i-1} + (\lambda_{i-1} \xi)/2$.  Hence, $\xi \lambda_{i} < \lambda_{i}({\underline{m}} + \xi/2)$ for some $i = 1 ,\ldots, k-1$, which implies that $\xi < 2{\underline{m}}$.  Therefore, $\xi_k = 2 \xi < 4{\underline{m}}$, which contradicts our initial assumption. The second result in statement (c) then follows.  Furthermore, if $f$ is convex, then $\xi = \tau = \tau_i = L = 0$ and hence (\ref{ineq:check}) is always false, which implies that (\ref{eq:xiupdate0}) and (\ref{eq:xiupdate}) are never executed.  Therefore, $\xi_k = \xi_0 = 0$ for every $k \geq 1$.
\end{proof}

\vspace{10pt}

\noindent Lemma \ref{observation} is similar to Lemma 3.1 in \cite{Liang}.

\begin{remark}\label{rem:Remark1}
VAR-FISTA is able to ``detect" when $f$ is convex, in which case, $\xi_k$ is always equal to $0$ for all $k$, unlike when $f$ is nonconvex.  This leads to better iteration complexity for VAR-FISTA as shown in Theorem \ref{thm:main} below.  Although the algorithm performs differently in terms of update from $\xi_k$ to $\xi_{k+1}$ depending on whether $f$ is convex or nonconvex, we carry out the analysis to find the iteration complexity for VAR-FISTA in an unified manner.  We do this by defining $\bar{\xi}$ to be such that 
\begin{eqnarray}\label{def:barxi}
\bar{\xi} := \left \{ \begin{array}{ll}
\max\{4\underline{m}, 1 \}, & {\mbox{if}}\ {\underline{m}} > 0, \\
 0, & {\mbox{if}}\ {\underline{m}} = 0.
 \end{array} \right.
\end{eqnarray}
\noindent It is easy to see from the above definition of $\bar{\xi}$ that its value is zero only when $f$ is convex.  Observe also from (\ref{def:barxi}) and Lemma \ref{observation}(c) that $\forall\ k \geq 0$, $\xi_k \leq \bar{\xi}$.
\end{remark}

%

\noindent The following lemma is crucial for us to arrive at the iteration complexity results for VAR-FISTA in Theorem \ref{thm:main}.
\begin{lemma}\label{lem:iterations}
The total number of times, $n_0$, the value of $\xi_k$ changes as $k$ increases is of the order $\max \{ \log {\underline{m}}, 1 \}$.  
\end{lemma}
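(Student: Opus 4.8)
The plan is to track how $\xi_k$ can change as $k$ increases and count the total number of such changes. By Lemma~\ref{observation}(c), the sequence $\{\xi_k\}$ is non-negative and non-decreasing, and it is bounded above by $\max\{4\underline{m},1\}$. The update rules (\ref{eq:xiupdate0}) and (\ref{eq:xiupdate}) in the subroutine show that $\xi$ is only ever modified by the rule $\xi \leftarrow 1$ (when $\xi=0$) or by doubling $\xi \leftarrow 2\xi$. Hence every change to the value of $\xi_k$ strictly increases it, and because $\{\xi_k\}$ is non-decreasing, once $\xi_k$ stabilizes it never decreases again; so counting the number of changes amounts to counting how many times these two update rules can fire.

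First I would dispose of the convex case: if $\underline{m}=0$, then Lemma~\ref{observation}(c) gives $\xi_k=0$ for all $k$, so $n_0=0$ and the claimed bound $\max\{\log\underline{m},1\}$ holds trivially (interpreting the bound as its positive part). For the nonconvex case $\underline{m}>0$, the value of $\xi$ starts at $\xi_0=0$, is first set to $1$ by (\ref{eq:xiupdate0}), and thereafter can only double via (\ref{eq:xiupdate}). Since $\xi_k \le \max\{4\underline{m},1\}$ for all $k$ by Lemma~\ref{observation}(c), the value of $\xi$ cannot exceed this upper bound. Starting from $1$ and doubling, the number of doublings before exceeding $\max\{4\underline{m},1\}$ is at most $\log_2\!\big(\max\{4\underline{m},1\}\big)$. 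Adding the single initial change $0\to 1$, the total number of changes is bounded by $1 + \log_2\!\big(\max\{4\underline{m},1\}\big)$, which is of order $\max\{\log\underline{m},1\}$.

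The main point to verify carefully is that each execution of (\ref{eq:xiupdate0}) or (\ref{eq:xiupdate}) genuinely counts as a change in the value of $\xi_k$ and that no change can leave $\xi_k$ at a value inconsistent with the boundedness from Lemma~\ref{observation}(c); this is where I would be most careful, since step $k3$ may be re-entered several times within a single outer iteration $k$, yet Lemma~\ref{observation}(c) guarantees the upper bound on $\xi_k$ holds at every stage. Because the set of attainable values of $\xi$ under the doubling rule, namely $\{0,1,2,4,8,\ldots\}$ capped at $\max\{4\underline{m},1\}$, has cardinality $O(\max\{\log\underline{m},1\})$, and each change moves $\xi$ strictly up through this finite ladder, the total count $n_0$ is bounded by the number of rungs, giving the stated order. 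I expect the only subtlety to be bookkeeping the initial $0\to 1$ step separately from the subsequent doublings and confirming that the cap derived from Lemma~\ref{observation}(c) is what controls the number of rungs.
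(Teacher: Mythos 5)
Your proof is correct and takes essentially the same approach as the paper: both arguments count the doublings of $\xi$ (after the initial $0 \to 1$ step) against a ceiling of order $\underline{m}$, which immediately gives the $\mathcal{O}(\max\{\log \underline{m},1\})$ bound. The only cosmetic difference is that you obtain the ceiling by citing Lemma \ref{observation}(c), whereas the paper re-derives the underlying mechanism directly (once $\xi \geq 2\underline{m}$, the condition (\ref{ineq:check}) can never hold again, so no further updates occur) --- the very same argument that proves the bound in Lemma \ref{observation}(c).
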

\begin{proof}
We observe that if $\xi \geq 2\underline{m}$, the inequalities in (\ref{ineq:check}) do not hold, which follows from Lemma \ref{observation}(a), $a_k \geq a_0 = 4$ and that $L$ in (\ref{def:L}) is always less than or equal to $\underline{m}$.  This, together with the update formula (\ref{eq:xiupdate}), leads to the result in the lemma.  
\end{proof}


\vspace{10pt}
%
\noindent The following lemma provides a bound on $\| x_k - x_0 \|$.

\begin{lemma}\label{lem:boundonxk}
We have for $k \geq 0$, $\|x_{k} - x_0 \| \leq Ck$, where 
\begin{eqnarray*}
C := 2(2 + \bar{\xi} \lambda_0)D_h.
\end{eqnarray*}
\end{lemma}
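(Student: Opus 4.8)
The plan is to reduce the claim to a bound on the (unprojected) argument of $P_\Omega$ appearing in \eqref{def:xk}, exploit the fact that its two coefficients sum to $1$, and then invoke the diameter $D_h$ together with the estimate $a_{k-1}\le 4k$ from Lemma \ref{estimates}. The case $k=0$ is immediate since $x_0-x_0=0$, so assume $k\ge 1$.

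Write $w_k$ for the point inside $P_\Omega(\cdot)$ in \eqref{def:xk}, so that $x_k=P_\Omega(w_k)$. Since $x_0=y_0\in\dom h\subseteq\Omega$ we have $P_\Omega(x_0)=x_0$, and because $P_\Omega$ is nonexpansive it suffices to bound $\|w_k-x_0\|$. First I would simplify the coefficients of $w_k$ using two facts recorded earlier: the identity $A_k=a_{k-1}^2$ from \eqref{Akak}, which yields $A_{k-1}=A_k-a_{k-1}=a_{k-1}(a_{k-1}-1)$, and the relation $\tau_k a_{k-1}=2\xi_k\lambda_k$ from Lemma \ref{observation}(b). A short computation then shows that the coefficient of $y_k$ minus the coefficient of $y_{k-1}$ in $w_k$ equals exactly $1$; hence $w_k$ is an affine combination of $y_k$ and $y_{k-1}$ which can be rewritten as
\[
w_k=y_k+\frac{a_{k-1}-1}{2\xi_k\lambda_k+1}\,(y_k-y_{k-1}).
\]

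The key point is to pair $y_k$ with itself in this way rather than bounding the two individual terms of $w_k$ separately; the latter would lose a spurious factor of $2$ and fail to give a bound that is linear in $k$ with the stated constant. With the affine form in hand, since $y_k,y_{k-1},x_0\in\dom h$ we have both $\|y_k-x_0\|\le D_h$ and $\|y_k-y_{k-1}\|\le D_h$, and since $2\xi_k\lambda_k+1\ge 1$ the coefficient obeys $(a_{k-1}-1)/(2\xi_k\lambda_k+1)\le a_{k-1}-1$. The triangle inequality then gives
\[
\|x_k-x_0\|\le\|w_k-x_0\|\le\Bigl(1+\tfrac{a_{k-1}-1}{2\xi_k\lambda_k+1}\Bigr)D_h\le a_{k-1}D_h\le 4kD_h,
\]
using $a_{k-1}\le 4k$ from Lemma \ref{estimates}. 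Since $C=2(2+\bar{\xi}\lambda_0)D_h\ge 4D_h$, this yields $\|x_k-x_0\|\le 4kD_h\le Ck$, which is the desired conclusion (the extra term $\bar{\xi}\lambda_0$ in $C$ simply leaves slack in the nonconvex case).

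I expect the only genuine obstacle to be the algebraic verification that the two coefficients of $w_k$ differ by exactly $1$; everything else is a routine application of nonexpansiveness, the triangle inequality, and the two displayed estimates. One should also double-check that $A_k=a_{k-1}^2$ (equivalently $A_{k-1}=a_{k-1}(a_{k-1}-1)$) is applied with the correct index, which holds for every $k\ge 1$ given the initialization $A_0=12$, $a_0=4$.
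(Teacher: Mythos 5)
Your proof is correct and follows essentially the same route as the paper's: both use nonexpansiveness of $P_\Omega$ (with $x_0\in\Omega$), the same algebraic regrouping of the affine combination into the differences $y_k-y_{k-1}$ and $y_k-x_0$, the diameter bound $D_h$, and $a_{k-1}\le 4k$ from Lemma \ref{estimates}. The only difference is cosmetic: you retain the factor $1/(2\xi_k\lambda_k+1)$ so that the $y_k-x_0$ term has coefficient exactly $1$, which yields the slightly tighter bound $4kD_h$, whereas the paper drops that factor earlier and absorbs the resulting extra $2\bar{\xi}\lambda_0 D_h$ term into the constant $C$.
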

\begin{proof}
We have for $k \geq 1$, by (\ref{eq:all ak}), (\ref{def:xk}), (\ref{Akak}), Lemma \ref{observation}, (\ref{diameter}), the last sentence in Remark \ref{rem:Remark1} and Lemma \ref{estimates}, that
\begin{eqnarray*}
\| x_{k} - x_0 \| & \leq & \left\| \frac{(1+ \tau_k)A_{k}}{a_{k-1}(\tau_k a_{k-1} + 1)}y_{k} - \frac{A_{k-1}}{a_{k-1}(\tau_k a_{k-1} + 1)}y_{k-1} - x_0 \right\| \\
& = & \frac{1}{(2\xi_{k}\lambda_{k} + 1)a_{k-1}} \left\| (1+ \tau_k)A_{k}y_{k} - A_{k-1} y_{k-1} - (\tau_k a_{k-1} + 1)a_{k-1} x_0 \right\| \\
& \leq & \frac{1}{a_{k-1}}\left\| (1+ \tau_k)A_{k}y_{k} - A_{k-1} y_{k-1} - (\tau_k a_{k-1} + 1) a_{k-1} x_0 \right\| \\
& = &  \frac{1}{a_{k-1}} \left\| A_{k-1}(y_{k} - y_{k-1}) + (\tau_k a_{k-1} + 1)a_{k-1}(y_{k} - x_0) \right\| \\
& \leq & \frac{D_h}{a_{k-1}}( A_{k-1} + (\tau_k a_{k-1} + 1)a_{k-1}) \\
& = & D_h\left( \frac{A_{k-1}}{a_{k-1}} + 2\xi_{k}\lambda_{k} + 1 \right) \leq  D_h\left( \frac{A_{k-1}}{a_{k-1}} + 2{\bar{{\xi}}}\lambda_0 + 1 \right) \\
& \leq & D_h(a_{k-1} + 2{\bar{{\xi}}}\lambda_0) \leq  D_h(4k + 2 \overline{\xi} \lambda_0) \leq  2(2 + \overline{\xi} \lambda_0)D_h k.
\end{eqnarray*}
The conclusion of the lemma then follows.
\end{proof}

\vspace{10pt}


\noindent Below are two technical results that are needed in the analysis to arrive at Theorem \ref{thm:main}.  Proposition \ref{prop:technical} is used to prove Lemma \ref{lem:xi}.
\begin{proposition}\label{prop:technical}
For $u \in \ {\rm{dom}}\ h$, for every $k \geq 1$, we have
\begin{align}
& A_{k-1} \| y_{k-1} - \tilde{x}_k \|^2  \leq  2 \| u - x_{k-1} \|^2 + 2 D_h^2, \label{ineq:tech} \\
& a_{k-1} \| u - \tilde{x}_k \|^2  \leq  \frac{2}{a_{k-1}} \| u - x_{k-1} \|^2 + 2 a_{k-1} D_h^2. \label{ineq:tech1}
\end{align}
\end{proposition}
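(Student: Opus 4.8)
The plan is to exploit the single structural fact that $\tx_k$ is a convex combination of $y_{k-1}$ and $x_{k-1}$, together with the identity $A_k = a_{k-1}^2$ (equivalently \eqref{Akak}), so that both estimates reduce to elementary norm inequalities combined with the diameter bound $\|u_1 - u_2\| \leq D_h$ available whenever $u_1, u_2 \in \dom h$. Note at the outset that in \eqref{eq:all ak} the coefficients $A_{k-1}/A_k$ and $a_{k-1}/A_k$ sum to one, since $A_k = A_{k-1} + a_{k-1}$; and that $u, y_{k-1} \in \dom h$ (with $y_{k-1} \in \dom h$ recorded earlier in the paper), whereas $x_{k-1}$ lies only in $\Omega$, which is precisely why the terms $\|u - x_{k-1}\|^2$ must be carried through rather than bounded by $D_h^2$.

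For \eqref{ineq:tech} I would first observe the collapse $y_{k-1} - \tx_k = (a_{k-1}/A_k)(y_{k-1} - x_{k-1})$, obtained by subtracting $y_{k-1}$ from its convex-combination expression. Hence $A_{k-1}\|y_{k-1} - \tx_k\|^2 = (A_{k-1} a_{k-1}^2/A_k^2)\|y_{k-1} - x_{k-1}\|^2$, and invoking $a_{k-1}^2 = A_k$ this prefactor simplifies to $A_{k-1}/A_k \leq 1$. It then remains to apply $\|y_{k-1} - x_{k-1}\|^2 \leq 2\|y_{k-1} - u\|^2 + 2\|u - x_{k-1}\|^2$ and the diameter bound $\|y_{k-1} - u\| \leq D_h$, which delivers the right-hand side of \eqref{ineq:tech}.

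For \eqref{ineq:tech1} the decisive point is that $A_k = a_{k-1}^2$ turns the weight $a_{k-1}/A_k$ into exactly $1/a_{k-1}$, which is what manufactures the coefficient $2/a_{k-1}$ in the target. I would write $u - \tx_k = (A_{k-1}/A_k)(u - y_{k-1}) + (1/a_{k-1})(u - x_{k-1})$, apply $\|p + q\|^2 \leq 2\|p\|^2 + 2\|q\|^2$, and multiply through by $a_{k-1}$: the $x_{k-1}$-term becomes precisely $(2/a_{k-1})\|u - x_{k-1}\|^2$, while the $y_{k-1}$-term equals $2 a_{k-1}(A_{k-1}^2/A_k^2)\|u - y_{k-1}\|^2 \leq 2 a_{k-1} D_h^2$, using $A_{k-1}/A_k \leq 1$ together with $\|u - y_{k-1}\| \leq D_h$.

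I do not anticipate a genuine obstacle; the only subtlety is selecting the right splitting of $u - \tx_k$ and using Young's inequality $\|p+q\|^2 \leq 2\|p\|^2 + 2\|q\|^2$ rather than the convexity of $\|\cdot\|^2$ (which would give a weaker coefficient on $\|u - x_{k-1}\|^2$), and then invoking $A_k = a_{k-1}^2$ at the exact moment needed to match the stated constants. One should also confirm the identity $A_k = a_{k-1}^2$ at the base index $k = 1$, which holds since $a_0 = 4$ and $A_1 = A_0 + a_0 = 16 = a_0^2$.
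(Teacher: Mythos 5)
Your proposal is correct and follows essentially the same route as the paper's proof: both use the convex-combination identity for $\tilde{x}_k$ from \eqref{eq:all ak}, the relation $A_k = a_{k-1}^2$ from \eqref{Akak}, the inequality $\|p+q\|^2 \le 2\|p\|^2 + 2\|q\|^2$, and the diameter bound for points of ${\rm{dom}}\ h$ (the paper writes out \eqref{ineq:tech} in full and dispatches \eqref{ineq:tech1} as ``arguing in a similar manner,'' which is exactly the argument you spell out). Your added observations, that Young's inequality rather than convexity of $\|\cdot\|^2$ is needed to obtain the coefficient $2/a_{k-1}$, and that $u, y_{k-1} \in {\rm{dom}}\ h$ while $x_{k-1}$ need only lie in $\Omega$, are accurate refinements of the same proof.
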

\begin{proof}
\noindent By the definition of $\tilde{x}_k$ in \eqref{eq:all ak},
relations \eqref{diameter} and (\ref{Akak}),
the fact that $A_{k} = A_{k-1} + a_{k-1} \ge A_{k-1}$ due to \eqref{eq:all ak}, the inequality $\|a+b\|^2 \le 2(\|a\|^2+\|b\|^2)$ for any $a,b \in \mathbb{R}^n$, we obtain for $u \in {\rm{dom}}\ h$,
\begin{eqnarray*}
A_{k-1} \| y_{k-1} - \tilde{x}_k \|^2 & = & \frac{A_{k-1} a_{k-1}^2}{A_{k}^2} \| x_{k-1} - y_{k-1} \|^2 =  \frac{A_{k-1}}{A_{k}} \| (x_{k-1} -u) - (y_{k-1}-u )\|^2 \\
& \leq  &  \frac{2A_{k-1}}{A_{k}}\left [ \| u - x_{k-1} \|^2 +  \| u - y_{k-1} \|^2 \right]  \leq  2 \|u - x_{k-1} \|^2 + 2D^2_h. 
\end{eqnarray*}	
Hence, (\ref{ineq:tech}) holds.  Arguing in a similar manner, (\ref{ineq:tech1}) holds as well. 
\end{proof}

\vspace{10pt}

\noindent The following technical result allows us to arrive at Lemma \ref{lem:basicinequality}, which through Lemma \ref{lem:xi}, then leads to Theorem \ref{thm:main}, the main result of this section.  This proposition is also needed in the proof of Lemma \ref{lem:xi}.  The proof of this proposition and that of Lemma \ref{lem:basicinequality} are similar to that of Lemma 2.2 and Lemma 2.3 in \cite{Liang} respectively, and are provided in the appendix of this note for the sake of completeness.

\begin{proposition}\label{prop:technical1}
$\tilde{\gamma}_k$ defined in (\ref{def:tildegamma}) and $\gamma_k$ defined in (\ref{def:gamma}) are $(\tau_k/\lambda_k)$-strongly convex functions, $\gamma_k(u) \leq \tilde{\gamma}_k(u) \ \forall\ u \in {\rm{dom}}\ h$, $\tilde{\gamma}_k(y_k) = \gamma_k(y_k)$,
\begin{eqnarray}\label{eq:minimizationequality}
\min_u \left\{ \tilde{\gamma}_k(u) + \frac{1}{2\lambda_k} \| u - \tilde{x}_k \|^2 \right \} = \min_u \left\{  \gamma_k(u) + \frac{1}{2\lambda_k} \| u - \tilde{x}_k \|^2 \right\},
\end{eqnarray}
and these minimization problems have $y_k$ as their unique optimal solution;
\end{proposition}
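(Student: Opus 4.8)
The plan is to verify each of the claimed properties by unwinding the definitions in \eqref{def:tildegamma} and \eqref{def:gamma} and relating the two minimization problems. First I would establish the strong convexity. The function $\tilde{\gamma}_k$ in \eqref{def:tildegamma} is the sum of the affine term $l_f(\cdot;\tilde{x}_k)$, the convex function $h$, and the quadratic $(\tau_k/2\lambda_k)\|\cdot - \tilde{x}_k\|^2$; since $h$ is convex and the quadratic term contributes a Hessian of $(\tau_k/\lambda_k)I$, $\tilde{\gamma}_k$ is $(\tau_k/\lambda_k)$-strongly convex. For $\gamma_k$ in \eqref{def:gamma}, the first two terms are affine in $u$ (a constant plus a linear functional) and only the quadratic $(\tau_k/2\lambda_k)\|u - y_k\|^2$ contributes curvature, again giving $(\tau_k/\lambda_k)$-strong convexity.

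Next I would prove the pointwise inequality $\gamma_k(u) \le \tilde{\gamma}_k(u)$ together with the equality $\tilde{\gamma}_k(y_k) = \gamma_k(y_k)$. The key observation, flagged in the text after \eqref{def:gamma}, is that $\gamma_k$ is built from $\tilde{\gamma}_k$ by replacing the convex part with its linearization (subgradient lower bound) at $y_k$. Concretely, since $y_k$ is the minimizer of $\tilde{\gamma}_k(u) + \frac{1}{2\lambda_k}\|u - \tilde{x}_k\|^2$ (which is exactly \eqref{def:yk} with $\tau = \tau_k$, $\lambda = \lambda_k$, $1+\tau = 1+\tau_k$), the optimality condition yields a subgradient of $h$ at $y_k$ that, assembled with the affine and quadratic pieces, produces the linear term $\frac{1}{\lambda_k}\langle \tilde{x}_k - y_k, u - y_k\rangle$ appearing in $\gamma_k$. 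I would write $\tilde{\gamma}_k(u) - \gamma_k(u)$ explicitly; the affine and quadratic terms combine so that the difference equals a convexity-gap expression that is nonnegative by the subgradient inequality for $h$ and vanishes at $u = y_k$. This simultaneously gives both $\gamma_k \le \tilde{\gamma}_k$ on $\dom h$ and equality at $y_k$.

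Then I would address \eqref{eq:minimizationequality} and the claim that $y_k$ is the common unique minimizer. Because both $\tilde{\gamma}_k$ and $\gamma_k$ are $(\tau_k/\lambda_k)$-strongly convex and $h$ is lsc proper convex, each regularized problem has a unique minimizer, so I only need to show both minimizers coincide with $y_k$ and that the minimum values agree. For the right-hand side, I would verify that $y_k$ is the unconstrained minimizer of $\gamma_k(u) + \frac{1}{2\lambda_k}\|u - \tilde{x}_k\|^2$ by checking the first-order stationarity condition directly from \eqref{def:gamma}, using the same subgradient relation derived above. For the left-hand side, $y_k$ solving \eqref{def:yk} already identifies it as the minimizer of $\tilde{\gamma}_k(u) + \frac{1}{2\lambda_k}\|u-\tilde{x}_k\|^2$. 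Finally, evaluating both objectives at their common minimizer $y_k$ and invoking $\tilde{\gamma}_k(y_k) = \gamma_k(y_k)$ gives equality of the two minimum values, establishing \eqref{eq:minimizationequality}.

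The main obstacle I anticipate is the algebraic bookkeeping in identifying the exact subgradient of $h$ at $y_k$ from the optimality condition of \eqref{def:yk} and showing it produces precisely the linearizing functional in \eqref{def:gamma}; getting the coefficient $\frac{1}{\lambda_k}$ and the sign of $\langle \tilde{x}_k - y_k, \cdot\rangle$ to match requires care, particularly reconciling the factor $1+\tau_k$ in \eqref{def:yk} with the separate treatment of the linear and quadratic curvature in $\gamma_k$. Once that identification is made cleanly, the inequality, the equality at $y_k$, and the coincidence of minimizers all follow, and \eqref{eq:minimizationequality} is immediate.
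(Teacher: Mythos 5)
Your proposal is correct, but it proves the proposition by a genuinely different mechanism than the paper. You open up the optimality condition of \eqref{def:yk} to extract $w\in\partial h(y_k)$ with $w=-\nabla f(\tilde{x}_k)+\frac{1+\tau_k}{\lambda_k}(\tilde{x}_k-y_k)$, observe that assembling $w$ with the affine and quadratic pieces yields $\frac{1}{\lambda_k}(\tilde{x}_k-y_k)\in\partial\tilde{\gamma}_k(y_k)$, and then get $\tilde{\gamma}_k(u)-\gamma_k(u)=h(u)-h(y_k)-\langle w,u-y_k\rangle\ge 0$ from the subgradient inequality for $h$ (with equality at $u=y_k$), finishing with a direct stationarity check that $y_k$ also minimizes the $\gamma_k$-regularized problem; this bookkeeping does close, exactly as you anticipated. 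The paper never touches $\partial h$ at all: it uses only (i) the standard strong-convexity-at-the-minimizer inequality for the first problem, namely $\tilde{\gamma}_k(y_k)+\frac{1}{2\lambda_k}\|y_k-\tilde{x}_k\|^2+\frac{1+\tau_k}{2\lambda_k}\|y_k-u\|^2\le\tilde{\gamma}_k(u)+\frac{1}{2\lambda_k}\|u-\tilde{x}_k\|^2$, and (ii) the three-point algebraic identity $\|y_k-\tilde{x}_k\|^2+\|y_k-u\|^2=2\langle\tilde{x}_k-y_k,u-y_k\rangle+\|u-\tilde{x}_k\|^2$, which shows that $\gamma_k(u)+\frac{1}{2\lambda_k}\|u-\tilde{x}_k\|^2$ equals exactly the left-hand side of (i); all four claims then drop out of comparing these two displays. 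The paper's route is more compact and sidesteps subdifferential calculus entirely, while yours makes the remark after \eqref{def:gamma} precise: the gap $\tilde{\gamma}_k-\gamma_k$ is literally the convexity gap of $h$ at $y_k$, which explains structurally why $\gamma_k$ is a tight lower model. One small looseness in your write-up: uniqueness of the minimizers should be attributed to the $(1+\tau_k)/\lambda_k$-strong convexity of the regularized objectives (the prox term supplies the positive modulus), not to the $(\tau_k/\lambda_k)$-strong convexity of $\tilde{\gamma}_k$ and $\gamma_k$ themselves, since $\tau_k$ may be zero in the convex case.
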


\begin{lemma}\label{lem:basicinequality}
We have for $k \geq 1$, for every $u \in \Omega$,
	\begin{align}
		& \lam_{k} A_{k} \phi(y_{k}) + \frac{ \tau_k a_{k-1} + 1}{2} \| u - x_{k} \|^2 + \frac{(1 - \gamma) A_{k}}{2} \| y_{k} - \tilde{x}_{k} \|^2  \nonumber\\
		& \leq  \lam_{k} A_{k-1} \gamma_{k}(y_{k-1}) + \lam_{k}  a_{k-1} \gamma_{k}(u) + \frac{1}{2} \|u - x_{k-1} \|^2,  \label{ineq:recursive}
	\end{align}
\end{lemma}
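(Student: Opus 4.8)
The plan is to establish the recursive inequality (\ref{ineq:recursive}) by combining three ingredients: the strong convexity properties of $\gamma_k$ from Proposition \ref{prop:technical1}, the optimality characterization of $x_k$ as the minimizer of the problem (\ref{optimizationproblem}), and a lower-curvature/upper-curvature estimate that controls $\phi(y_k)$ from above using $U_k$ and $L_k$. First I would record the function-value estimate for $\phi(y_k)$. Since $U_k \lambda_k \le \gamma$ (Lemma \ref{observation}(b)) and $U_k = 2[f(y_k)-\ell_f(y_k;\tilde x_k)]/\|y_k-\tilde x_k\|^2$ by (\ref{def:U}), we get
\begin{align*}
\phi(y_k) = f(y_k)+h(y_k) \le \ell_f(y_k;\tilde x_k)+h(y_k)+\frac{\gamma}{2\lambda_k}\|y_k-\tilde x_k\|^2.
\end{align*}
Rewriting the right side in terms of $\tilde\gamma_k(y_k)=\gamma_k(y_k)$ from (\ref{def:tildegamma}) produces the term $\gamma_k(y_k)$ plus a leftover $\frac{\gamma-\tau_k}{2\lambda_k}\|y_k-\tilde x_k\|^2$; this is precisely where the $(1-\gamma)A_k/2\|y_k-\tilde x_k\|^2$ slack term on the left of (\ref{ineq:recursive}) will originate after scaling by $\lambda_k A_k$.

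Next I would exploit the optimality of $x_k$ in (\ref{optimizationproblem}). Because the objective $a_{k-1}\gamma_k(u)+\frac{1}{2\lambda_k}\|u-x_{k-1}\|^2$ is $(\tau_k/\lambda_k)\cdot a_{k-1}+\frac{1}{\lambda_k}$-strongly convex with minimizer $x_k$, the standard strong-convexity inequality at the minimizer gives, for every $u\in\Omega$,
\begin{align*}
a_{k-1}\gamma_k(x_k)+\frac{1}{2\lambda_k}\|x_k-x_{k-1}\|^2+\frac{\tau_k a_{k-1}+1}{2\lambda_k}\|u-x_k\|^2 \le a_{k-1}\gamma_k(u)+\frac{1}{2\lambda_k}\|u-x_{k-1}\|^2.
\end{align*}
This already delivers the $\frac{\tau_k a_{k-1}+1}{2}\|u-x_k\|^2$ term (after multiplying by $\lambda_k$) and the $\lambda_k a_{k-1}\gamma_k(u)+\frac12\|u-x_{k-1}\|^2$ terms on the right of (\ref{ineq:recursive}). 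The remaining task is to connect $\gamma_k(x_k)$ and the convex-combination structure of $\tilde x_k$ back to $A_k\phi(y_k)$ and $A_{k-1}\gamma_k(y_{k-1})$.

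The central algebraic maneuver is to use that $\tilde x_k=\frac{A_{k-1}}{A_k}y_{k-1}+\frac{a_{k-1}}{A_k}x_{k-1}$ (from (\ref{eq:all ak})) together with $A_k=A_{k-1}+a_{k-1}$ and the relation $A_k=a_{k-1}^2$ (i.e. (\ref{Akak})) to telescope the quadratic terms. Concretely I would apply convexity of $\gamma_k$ to the point $y_k=\frac{A_{k-1}}{A_k}y_{k-1}+\frac{a_{k-1}}{A_k}x_k$ (this identity should follow from (\ref{def:xk}) after clearing denominators), yielding $A_k\gamma_k(y_k)\le A_{k-1}\gamma_k(y_{k-1})+a_{k-1}\gamma_k(x_k)$. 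Substituting this into the optimality inequality, using $\gamma_k(y_k)=\tilde\gamma_k(y_k)$ and the function-value bound from the first step, then scaling throughout by $\lambda_k A_k/(\text{appropriate factor})$ and matching the $\|x_k-x_{k-1}\|^2$ and $\|y_k-\tilde x_k\|^2$ quadratic terms via the $A_k=a_{k-1}^2$ identity should assemble exactly (\ref{ineq:recursive}).

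I expect the main obstacle to be the bookkeeping of the quadratic terms: verifying that the residual $\frac{1}{2\lambda_k}\|x_k-x_{k-1}\|^2$ from the optimality inequality combines correctly with the displacement $\|y_k-\tilde x_k\|^2$ to produce the clean $\frac{(1-\gamma)A_k}{2}\|y_k-\tilde x_k\|^2$ coefficient, which relies delicately on $A_k=a_{k-1}^2$ and on the precise convex-combination weights defining $y_k$ and $\tilde x_k$. Keeping track of where the factor $(1-\gamma)$ (rather than $1$) appears — it comes solely from the gap between $U_k\lambda_k\le\gamma$ and the $\tau_k$ in $\tilde\gamma_k$ — is the subtle point that makes this argument adaptive to convexity, so I would treat that slack term with particular care.
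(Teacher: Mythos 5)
Your first and last ingredients are exactly the ones the paper uses: the function-value estimate $\lam_k\phi(y_k)+\frac{1-\gamma}{2}\|y_k-\tx_k\|^2\le\lam_k\gamma_k(y_k)+\frac12\|y_k-\tx_k\|^2$ coming from $U_k\lam_k\le\gamma$ and $\tilde\gamma_k(y_k)=\gamma_k(y_k)$, and the strong-convexity inequality at the minimizer $x_k$ of (\ref{optimizationproblem}). The gap is in your central step. The identity $y_k=\frac{A_{k-1}}{A_k}y_{k-1}+\frac{a_{k-1}}{A_k}x_k$ does not follow from (\ref{def:xk}): clearing denominators there (and ignoring the projection) gives
\begin{eqnarray*}
y_k = \frac{A_{k-1}}{(1+\tau_k)A_k}\,y_{k-1} + \frac{a_{k-1}(\tau_k a_{k-1}+1)}{(1+\tau_k)A_k}\,x_k,
\end{eqnarray*}
using $A_{k-1}+a_{k-1}(\tau_k a_{k-1}+1)=(1+\tau_k)A_k$, and these weights agree with yours only when $\tau_k=0$; moreover the operator $P_\Omega$ in (\ref{def:xk}) may be active, in which case no affine relation between $y_k$, $y_{k-1}$ and $x_k$ holds at all. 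So the convexity step $A_k\gamma_k(y_k)\le A_{k-1}\gamma_k(y_{k-1})+a_{k-1}\gamma_k(x_k)$ is unjustified precisely in the nonconvex regime ($\tau_k>0$) that the lemma must cover.

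The repair is to replace the false identity by an inequality your plan never invokes: Proposition \ref{prop:technical1} states that $y_k$ is the \emph{unique minimizer} of $u\mapsto\gamma_k(u)+\frac{1}{2\lam_k}\|u-\tx_k\|^2$. Taking the test point $w:=(A_{k-1}y_{k-1}+a_{k-1}x_k)/A_k$, which is a genuine convex combination since $A_k=A_{k-1}+a_{k-1}$, minimality gives $\gamma_k(y_k)+\frac{1}{2\lam_k}\|y_k-\tx_k\|^2\le\gamma_k(w)+\frac{1}{2\lam_k}\|w-\tx_k\|^2$. Then convexity of $\gamma_k$ applied at $w$ (not at $y_k$), together with $w-\tx_k=\frac{a_{k-1}}{A_k}(x_k-x_{k-1})$ and $A_k=a_{k-1}^2$, yields $\frac{A_k}{2}\|w-\tx_k\|^2=\frac12\|x_k-x_{k-1}\|^2$, after which your $x_k$-optimality inequality assembles (\ref{ineq:recursive}) exactly as you intended. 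This is the paper's route; once minimality of $y_k$ replaces the identity, the quadratic bookkeeping you were worried about closes without ever needing any relation between $y_k$ and $x_k$.
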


\noindent The inequality (\ref{ineq:recursive}) in the above lemma is the basic inequality fundamental in proving the iteration complexity results for VAR-FISTA, and is the key result needed to show that the following lemma holds.

\begin{lemma}\label{lem:xi} 
For every $ k\ge 1$,
\begin{align}
	& \frac{1-\gamma}{2}\left( \sum_{i=1}^{k}A_{i}\| y_{i} - \tilde{x}_i \|^2 \right) \le 
	 \lam_{0} A_{0}(\phi(y_{0}) - \phi(y_{k}^{\rm{min}}))+ \left(\frac{1}{2} + 2\overline{\xi}\lambda_0n_0\right) D_h^2  \nonumber \\
	 & +  2\bar{\xi} \lam_0 C^2\left(\sum_{i=1}^k \frac{i^2}{a_{i-1}} + n_0 k^2 \right) + \overline{\xi}\lambda_0 D_h^2 \sum_{i=1}^{k} (3 + a_{i-1}). \label{ineq:key4}
\end{align}
\end{lemma}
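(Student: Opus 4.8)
The plan is to run the basic recursion (\ref{ineq:recursive}) of Lemma~\ref{lem:basicinequality} at each index $i=1,\dots,k$ with the \emph{single fixed} choice $u=y_k^{\rm{min}}\in{\rm{dom}}\,h$, and to sum. The first task is to replace the two $\gamma_i$-terms on the right of (\ref{ineq:recursive}) by the true objective $\phi$ plus controllable curvature errors. Using Proposition~\ref{prop:technical1} ($\gamma_i\le\tilde\gamma_i$, with $\tilde\gamma_i$ as in (\ref{def:tildegamma})) together with the two defining entries of $L_i$ in (\ref{def:L}), I get $\gamma_i(y_{i-1})\le\phi(y_{i-1})+\tfrac12(L_i+\tau_i/\lambda_i)\|y_{i-1}-\tilde{x}_i\|^2$ and, since the running minimizer is seen by the final $L_k$, $\gamma_i(y_k^{\rm{min}})\le\phi(y_k^{\rm{min}})+\tfrac12(L_k+\tau_i/\lambda_i)\|y_k^{\rm{min}}-\tilde{x}_i\|^2$. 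The chain $\xi_i\lambda_{i-1}\ge L_i\lambda_i+\tau_i$ and $\xi_k\lambda_{i-1}\ge L_k\lambda_i+\tau_i$ from Lemma~\ref{observation}(b) then bounds the error coefficients, and (\ref{ineq:tech})--(\ref{ineq:tech1}) of Proposition~\ref{prop:technical} convert $A_{i-1}\|y_{i-1}-\tilde{x}_i\|^2$ and $a_{i-1}\|u-\tilde{x}_i\|^2$ into $\|u-x_{i-1}\|^2$ and $D_h^2$ terms. Writing $\tau_k a_{k-1}+1=2\xi_k\lambda_k+1$, the per-step inequality becomes
\begin{align*}
\lambda_i A_i \phi(y_i) + \Big(\xi_i\lambda_i + \tfrac12\Big)\|u - x_i\|^2 + \tfrac{(1-\gamma)A_i}{2}\|y_i - \tilde{x}_i\|^2
&\le \lambda_i A_{i-1}\phi(y_{i-1}) + \lambda_i a_{i-1}\phi(u) \\
&\quad + \Big(\tfrac12 + \xi_i\lambda_{i-1} + \tfrac{\xi_k\lambda_{i-1}}{a_{i-1}}\Big)\|u-x_{i-1}\|^2 + \big(\xi_i + \xi_k a_{i-1}\big)\lambda_{i-1}D_h^2 .
\end{align*}

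\textbf{Telescoping the objective terms.} Summing over $i$, the objective contribution is $\sum_i[\lambda_i A_{i-1}\phi(y_{i-1})+\lambda_i a_{i-1}\phi(y_k^{\rm{min}})-\lambda_i A_i\phi(y_i)]$. Introducing $\psi_i:=\phi(y_i)-\phi(y_k^{\rm{min}})\ge0$ (nonnegative because $y_k^{\rm{min}}=\argmin\{\phi(\tilde y):\tilde y\in\{y_0,\dots,y_k\}\}$ by Lemma~\ref{observation}(b)) and using $A_i=A_{i-1}+a_{i-1}$ from (\ref{eq:all ak}), each summand collapses to $\lambda_i(A_{i-1}\psi_{i-1}-A_i\psi_i)$. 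A discrete Abel/summation‑by‑parts step, exploiting that $\{\lambda_i\}$ is non-increasing (Lemma~\ref{observation}(a)) and $A_i\psi_i\ge0$, bounds this by the single boundary term $\lambda_1 A_0\psi_0\le\lambda_0 A_0(\phi(y_0)-\phi(y_k^{\rm{min}}))$, which is exactly the leading term of (\ref{ineq:key4}).

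\textbf{Telescoping the proximal terms and collecting errors.} After reindexing, the coefficient of $\|u-x_j\|^2$ ($1\le j\le k-1$) left on the right is $(\xi_{j+1}-\xi_j)\lambda_j+\xi_k\lambda_j/a_j$; the $i=k$ term $-(\xi_k\lambda_k+\tfrac12)\|u-x_k\|^2\le0$ is discarded, and the $j=0$ term is controlled by $\|u-x_0\|=\|y_k^{\rm{min}}-y_0\|\le D_h$. Here I use two facts: $\xi_{j+1}-\xi_j$ is nonzero only at the at most $n_0$ indices where $\xi$ changes (Lemma~\ref{lem:iterations}), and $\|u-x_j\|^2\le(D_h+Cj)^2\le 2D_h^2+2C^2j^2$ from Lemma~\ref{lem:boundonxk}. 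Bounding $\xi_i\le\bar\xi$ and $\lambda_i\le\lambda_0$ everywhere (Remark~\ref{rem:Remark1}, (\ref{def:barxi})), the jump terms produce $2\bar\xi\lambda_0 n_0D_h^2+2\bar\xi\lambda_0 C^2 n_0k^2$, the summable $\xi_k\lambda_j/a_j$ terms produce $2\bar\xi\lambda_0 C^2\sum_i i^2/a_{i-1}$ plus an $O(\bar\xi\lambda_0 D_h^2\sum 1/a_{i-1})$ remainder, and the explicit $D_h^2$ errors give $\bar\xi\lambda_0 D_h^2\sum_i(1+a_{i-1})$. Folding the leftover constants and $1/a_{i-1}$ pieces into the slack between $\sum(1+a_{i-1})$ and the target $\sum(3+a_{i-1})$ assembles (\ref{ineq:key4}).

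\textbf{Main obstacle.} The delicate point is the asymmetric accounting of the lower-curvature errors: the $y_{i-1}$-error must be charged to the \emph{step-local} $\xi_i$ so that the proximal telescoping closes exactly except at the $n_0$ jump indices, whereas the $y_k^{\rm{min}}$-error can only be controlled through the \emph{global} $L_k$ (which is the one ``seeing'' the running minimizer) and hence through $\xi_k$ — this is tolerable only because it carries the extra factor $1/a_{i-1}$, making $\sum i^2/a_{i-1}$ (rather than $\sum i^2$) appear. Getting these two bookkeeping regimes to match the stated coefficients, and verifying that the coarse bound from Lemma~\ref{lem:boundonxk} is applied \emph{only} at the $n_0$ jump steps rather than at every step (which would cost an inadmissible $k^3$), is the crux of the argument; the remaining constant-chasing is routine.
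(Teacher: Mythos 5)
Your proposal is correct and follows essentially the same route as the paper's own proof: applying Lemma~\ref{lem:basicinequality} at each step with the fixed point $u=y_k^{\rm{min}}$, converting the $\gamma_i$-terms to $\phi$ plus curvature errors via Proposition~\ref{prop:technical1}, (\ref{def:L}) and Lemma~\ref{observation}(b), converting distances via Proposition~\ref{prop:technical}, and telescoping with the $\xi$-jumps (at most $n_0$ of them, by Lemma~\ref{lem:iterations}) charged through Lemma~\ref{lem:boundonxk}. Your global Abel-summation bookkeeping of the jump terms is just a repackaging of the paper's block-wise telescoping over the intervals where $\xi_i$ is constant, and your final constant-accounting (absorbing the $1/a_{i-1}$ remainders into the slack of $\sum_i(3+a_{i-1})$) matches the paper's concluding step.
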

\begin{proof}
For $k \geq 1$, let $i_j \leq k$, $j \geq 1$, be such that $\xi_{i} = \xi_{i_{j}}$ for $i = i_{j-1} +1, \ldots, i_{j}$, where $i_0 = 0$ and $i_{n_1} = k$.  Note that $n_1 \leq  n_0$, by Lemma \ref{lem:iterations}.  From \eqref{ineq:recursive} in Lemma \ref{lem:basicinequality}, where we let $u = y_{k}^{\rm{min}}$, for $i_{j-1} + 1 \leq i \leq i_j$, we have
\begin{align}
	&  \frac{1 - \gamma}{2}A_{i} \| y_{i} - \tilde{x}_i \|^2 \nonumber \\
	& -  \left( \lam_{i-1} A_{i-1}(\phi(y_{i-1}) - \phi( y^{\min}_{k})) + \frac{1}{2} \|  y^{\min}_{k} - x_{i-1} \|^2 \right)
	+\left( \lam_{i}A_{i} (\phi(y_{i}) - \phi( y^{\min}_{k})) + \frac{1}{2} \|  y^{\min}_{k} - x_{i} \|^2 \right) \nonumber \\
	&   \le  \lam_{i} A_{i-1}(\gamma_i(y_{i-1}) - \phi(y_{i-1})) + \lam_{i} a_{i-1}(\gamma_i( y^{\min}_{k}) - \phi( y^{\min}_{k})) - \frac{ \tau_i a_{i-1}}{2} \|  y^{\min}_{k} - x_{i} \|^2 \nonumber \\
	& + \left( \lam_{i} - \lam_{i-1}\right)  A_{i-1}\left( \phi(y_{i-1}) - \phi(y^{\min}_{k})\right). \label{ineq:consequence} 
\end{align}
Observe that by Proposition \ref{prop:technical1}, the definition of $\tilde{\gamma}_i$ in (\ref{def:tildegamma}), and $L_i$ in view of (\ref{def:L}) that for $i = i_{j-1} + 1, \ldots, i_{j}$, 
\begin{eqnarray}
	\gamma_i(y_{i-1})-\phi(y_{i-1}) & \le & \tilde \gamma_i(y_{i-1})-\phi(y_{i-1})
	=\ell_f(y_{i-1};\tx_i)-f(y_{i-1})+\frac{\tau_i}{2\lam_{i}}\|y_{i-1}-\tx_i\|^2 \nonumber \\
&	\le & \left( \frac{L_{i}}{2}+\frac{\tau_i}{2\lam_{i}}\right) \|y_{i-1}-\tx_i\|^2,  \label{ineq:diff}
\end{eqnarray}
and 
\begin{eqnarray}
\gamma_i(y_{k}^{\rm{min}})-\phi(y_{k}^{\rm{min}}) & \le & \tilde \gamma_i(y_{k}^{\rm{min}})-\phi(y_{k}^{\rm{min}})
	=\ell_f(y_{k}^{\rm{min}};\tx_i)-f(y_k^{\rm{min}})+\frac{\tau_i}{2\lam_{i}}\|y_{k}^{\rm{min}}-\tx_i\|^2 \nonumber \\
&	\le  & \left( \frac{L_{k}}{2}+\frac{\tau_i}{2\lam_{i}}\right) \|y_{k}^{\rm{min}}-\tx_i\|^2.  \label{ineq:diff1}
\end{eqnarray}


\noindent From (\ref{ineq:consequence}), for $i_{j-1} + 1 \leq i \leq i_j$, using \eqref{ineq:diff}, (\ref{ineq:diff1}), $\{\lambda_i \}$ is non-increasing in view of Lemma \ref{observation}(a), $ \phi(y^{\min}_{k})\le\phi(y_{i-1})$,  Proposition \ref{prop:technical} where $u = y_k^{\rm{min}}$, $0 \le L_{i}\lam_{i}+\tau_i\le \xi_{i_j} \lambda_{i-1}$ and $0 \leq L_k \lambda_i + \tau_i \leq \xi_k \lambda_{i-1}$ in view of Lemma \ref{observation}(b), $\tau_i = 2 \xi_i \lambda_i/a_{i-1}$, $\xi_{i} = \xi_{i_j}$, $\lambda_{j-1} \leq \lambda_0$ and the last statement in Remark \ref{rem:Remark1}, we conclude that 
\begin{align}
	&  \frac{1 - \gamma}{2}A_{i} \| y_{i} - \tilde{x}_i \|^2 \nonumber \\
	& -  \left( \lam_{i-1} A_{i-1}(\phi(y_{i-1}) - \phi( y^{\min}_{k})) + \frac{1}{2} \|  y^{\min}_{k} - x_{i-1} \|^2 \right)
	+\left( \lam_{i}A_{i} (\phi(y_{i}) - \phi( y^{\min}_{k})) + \frac{1}{2} \|  y^{\min}_{k} - x_{i} \|^2 \right) \nonumber \\
	&   \le  \frac{1}{2}(L_{i} \lambda_i + \tau_i)A_{i-1} \| y_{i-1} - \tilde{x}_i \|^2 + \frac{1}{2} (L_k \lambda_i + \tau_i) a_{i-1} \| y_{k}^{\rm{min}}  - \tilde{x}_i \|^2 - \frac{ \tau_i a_{i-1}}{2} \|  y^{\min}_{k} - x_{i} \|^2 \nonumber \\
	& \le  (L_{i} \lambda_i + \tau_i)( \| y_{k}^{\rm{min}} - {x}_{i-1} \|^2 + D_h^2) + (L_k \lambda_i + \tau_i)\left(\frac{1}{a_{i-1}} \|y_k^{\rm{min}} - x_{i-1} \|^2 + a_{i-1} D_h^2 \right) \nonumber \\
	& - \frac{ \tau_i a_{i-1}}{2} \|  y^{\min}_{k} - x_{i} \|^2 \nonumber \\
	& \le \xi_{i_j}\lambda_{i-1}( \| y_{k}^{\rm{min}} - {x}_{i-1} \|^2 + D_h^2) + \xi_k \lambda_{i-1}\left(\frac{1}{a_{i-1}} \|y_k^{\rm{min}} - x_{i-1} \|^2 + a_{i-1} D_h^2 \right) - \xi_i \lambda_i  \|  y^{\min}_{k} - x_{i} \|^2 \nonumber \\
	& \le \xi_{i_j}(\lambda_{i-1}\| y_{k}^{\rm{min}} - {x}_{i-1} \|^2 -  \lambda_i  \|  y^{\min}_{k} - x_{i} \|^2) + \overline{\xi}\lambda_0\left( \frac{1}{a_{i-1}} \| y_k^{\rm{min}} - x_{i-1} \|^2 +  (1 + a_{i-1})D_h^2 \right). \label{ineq:consequence1} 
\end{align}
%
\noindent Summing the inequality in (\ref{ineq:consequence1}) from $i = 1$ to $k$, we obtain
	\begin{align}
	& \frac{1-\gamma}{2}\left( \sum_{i=1}^{k}A_{i}\| y_{i} - \tilde{x}_i \|^2 \right) \le 
	 \lam_{0} A_{0}(\phi(y_{0}) - \phi(y_{k}^{\min}))+ \frac{1}{2} \|y_{k}^{\min} - x_{0} \|^2 \nonumber \\
	 &  +\bar \xi \lam_0 \sum_{i=1}^k \left( \frac{1}{a_{i-1}} \| y_k^{\rm{min}} - x_{i-1} \|^2 +  (1 + a_{i-1})D_h^2 \right)  + \sum_{j=1}^{n_1} \xi_{i_j}\lam_{i_{j}-1} \| y^{\min}_{k} - x_{i_{j}-1} \|^2 . \label{ineq:key3}
	\end{align}
Now, for $0 \leq i \leq k$, by Lemma \ref{lem:boundonxk},
\begin{eqnarray*}
\| y_k^{\rm{min}} - x_{i} \|^2 \leq 2(\| y_k^{\rm{min}} - x_0 \|^2 + \| x_0 - x_i \|^2) \leq 2 \| y_k^{\rm{min}} - x_0 \|^2 + 2C^2i^2.
\end{eqnarray*}
Therefore, by the above and that $\xi_{i_j} \lambda_{i_{j-1}} \leq \overline{\xi} \lambda_0$, we have from (\ref{ineq:key3}) 
\begin{align*}
	& \frac{1-\gamma}{2}\left( \sum_{i=1}^{k}A_{i}\| y_{i} - \tilde{x}_i \|^2 \right) \le 
	 \lam_{0} A_{0}(\phi(y_{0}) - \phi(y_{k}^{\rm{min}}))+ \left(\frac{1}{2} + 2\overline{\xi}\lambda_0 \left(n_1 + \sum_{i=1}^{k}\frac{1}{a_{i-1}}\right) \right) \|y_{k}^{\rm{min}} - x_{0} \|^2 \\
	 &  + 2\bar{\xi} \lam_0 C^2\left(\sum_{i=1}^k \frac{i^2}{a_{i-1}} + n_1 k^2 \right) + \overline{\xi}\lambda_0 D_h^2 \sum_{i=1}^{k} (1 + a_{i-1}).
\end{align*}
The conclusion of the lemma then follows by noting that $n_1 \leq n_0$, $a_{i-1} \geq 1$ and the definition of $D_h$ in (\ref{diameter}).
\end{proof}

\vspace{10pt}

\noindent We are now ready to state the iteration complexity results of VAR-FISTA to solve Problem (\ref{CNO}).

\begin{theorem}\label{thm:main}
VAR-FISTA terminates to obtain an $\hat{\rho}$-approximate solution $(\hat{y},\hat{v})$ to Problem (\ref{CNO}) in at most
\begin{eqnarray}\label{complexity}
\left(\frac{3C_1 L_1}{\hat{\rho}^2}\right)^{1/3} + \left(\frac{3C_1\bar{\xi}\lambda_0(2C^2 + 3 D_h^2)}{\hat{\rho}^2}\right)^{1/2} + \frac{C_1 \bar{\xi}\lambda_0(6C^2(1 + n_0) + D_h^2)}{\hat{\rho}^2} + 1
\end{eqnarray}
iterations, where
\begin{align*}
& C_1 =  \left(\frac{8}{1 - \gamma}\right) \left(\frac{1}{\underline{\lambda}} + \frac{1}{2} \bar{\xi} + \overline{M}  \right)^2, \\
& L_1 = \lam_0A_0(\phi(y_0)-\phi(y^\ast))+ \left( \frac{1}{2} + 2 \overline{\xi} \lambda_0 n_0 \right)D_h^2,
\end{align*}
and recall that $C = 2(2 + \bar{\xi} \lambda_0)D_h$ and $n_0 = \mathcal{O}(\max \{ \log \underline{m}, 1 \})$.  Furthermore, if $f$ in Problem (\ref{CNO}) is convex, then the iteration complexity of VAR-FISTA to solve the problem is improved, and it finds an $\hat{\rho}$-approximate solution $(\hat{y},\hat{v})$ to Problem (\ref{CNO}) in at most 
\begin{eqnarray}\label{complexity2}
\left(\frac{3C_2 L_2}{\hat{\rho}^2}\right)^{1/3}  + 1 
\end{eqnarray}
iterations, where
\begin{align*}
& C_2 = \left(\frac{8}{1 - \gamma} \right) \left( \frac{1}{\underline{\lambda}} + \overline{M} \right)^2, \\
& L_2 = \lambda_0A_0(\phi(y_0) - \phi(y^\ast)) + \frac{1}{2} D_h^2.
\end{align*}
\end{theorem}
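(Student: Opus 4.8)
The plan is to turn the summed estimate of Lemma~\ref{lem:xi} into an explicit bound on the iteration count by a contrapositive ``not yet terminated'' argument, the decisive device being an elementary root inequality for cubics.

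First I would bridge the termination quantity $\|v_k\|$ and the quantity $\|y_k-\tx_k\|$ that (\ref{ineq:key4}) controls. From the definition (\ref{def:vk+1}) of $v_k$, the triangle inequality, the Lipschitz bound (\ref{lipschtiz}) with smallest constant $\overline{M}$ (note $y_k,\tx_k\in\Omega$), and $\tau_k=2\xi_k\lambda_k/a_{k-1}$, one gets
\[
\|v_k\| \le \left(\frac{1+\tau_k}{\lambda_k}+\overline{M}\right)\|y_k-\tx_k\| = \left(\frac{1}{\lambda_k}+\frac{2\xi_k}{a_{k-1}}+\overline{M}\right)\|y_k-\tx_k\|.
\]
Using $\lambda_k\ge\underline{\lambda}$ and $\xi_k\le\overline{\xi}$ from Lemma~\ref{observation}(c) and Remark~\ref{rem:Remark1}, together with $a_{k-1}\ge a_0=4$, this becomes $\|v_k\|\le(1/\underline{\lambda}+\overline{\xi}/2+\overline{M})\|y_k-\tx_k\|$; squaring, the prefactor is precisely $\tfrac{1-\gamma}{8}C_1$.

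Next I would argue by contradiction. If the algorithm has not terminated through iteration $k$, then $\|v_i\|>\hat{\rho}$ for every $i=1,\dots,k$, so the bridge inequality forces $\|y_i-\tx_i\|^2>\hat{\rho}^2/B$ with $B:=(1/\underline{\lambda}+\overline{\xi}/2+\overline{M})^2$. Substituting this into the left-hand side of (\ref{ineq:key4}) and using $\sum_{i=1}^k A_i\ge k^3/12$ from Lemma~\ref{estimates} gives
\[
\frac{\hat{\rho}^2k^3}{3C_1} \le \frac{1-\gamma}{2}\sum_{i=1}^k A_i\|y_i-\tx_i\|^2 \le \text{(right-hand side of (\ref{ineq:key4}))}.
\]
I would then organize that right-hand side as a polynomial in $k$: the term $\lambda_0 A_0(\phi(y_0)-\phi(y_k^{\min}))+(\tfrac12+2\overline{\xi}\lambda_0 n_0)D_h^2$ is constant (bounded via $\phi(y_k^{\min})\ge\phi(y^\ast)$ to give $L_1$), while the sums $\sum_{i=1}^k i^2/a_{i-1}$ and $\sum_{i=1}^k a_{i-1}$ are split into their quadratic- and linear-in-$k$ parts using the two-sided estimates $k/2\le a_{k-1}\le 4k$. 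The outcome is an inequality $k^3\le a_0+a_1 k+a_2 k^2$ with $a_0=3C_1L_1/\hat{\rho}^2$, $a_1=3C_1\overline{\xi}\lambda_0(2C^2+3D_h^2)/\hat{\rho}^2$, and a quadratic coefficient $a_2$ collecting the remaining $k^2$-terms.

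Finally I would invoke the elementary fact that $a_0,a_1,a_2\ge0$ and $k^3\le a_0+a_1k+a_2k^2$ imply $k\le a_0^{1/3}+a_1^{1/2}+a_2$: if instead $k>a_0^{1/3}+a_1^{1/2}+a_2$ then $k^2(k-a_2)>a_0^{1/3}k^2+a_1^{1/2}k^2\ge a_0+a_1k$, contradicting the hypothesis. This bounds the index of any non-terminating iteration by the first three summands of (\ref{complexity}), so termination occurs after at most one more step, giving the ``$+1$''. The convex case is then immediate: by Lemma~\ref{observation}(c), $f$ convex forces $\xi_k\equiv0$, hence $\overline{\xi}=0$, which annihilates the $a_1k$ and $a_2k^2$ contributions and collapses $L_1\to L_2$ and $C_1\to C_2$, leaving only the cube-root term of (\ref{complexity2}). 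The main obstacle is the middle bookkeeping: extracting the precise constants of $L_1$ and of the quadratic coefficient requires near-tight control of $a_{i-1}\sim i/2$ (the crude $a_{i-1}\le 4i$ alone inflates the $D_h^2$ part of $a_2$), while the cubic root inequality --- and checking the nonnegativity needed to apply it --- is the one genuinely non-routine ingredient that produces the characteristic $\hat{\rho}^{-2/3}$, $\hat{\rho}^{-1}$, $\hat{\rho}^{-2}$ structure.
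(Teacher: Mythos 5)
Your proposal follows essentially the same route as the paper: the same bridge inequality $\|v_i\|\le(1/\underline{\lambda}+\bar{\xi}/2+\overline{M})\|y_i-\tx_i\|$ (whose square is indeed $\tfrac{1-\gamma}{8}C_1$), substitution into Lemma \ref{lem:xi} via the non-termination contrapositive, the bound $\sum_{i=1}^kA_i\ge k^3/12$, the estimate $\sum_{i=1}^k i^2/a_{i-1}\le\sum_{i=1}^k 2i$ from $a_{i-1}\ge i/2$, a cubic root inequality to convert $k^3\le a_0+a_1k+a_2k^2$ into the three-term bound plus one (your elementary proof of that inequality is correct, and it is exactly the step the paper leaves implicit in ``the complexity result follows''), and the convex-case specialization $\bar{\xi}=0$, $C_1\to C_2$, $L_1\to L_2$.

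There is, however, one concrete gap, precisely at the point you flag as ``the main obstacle'': the term $\bar{\xi}\lambda_0D_h^2\sum_{i=1}^ka_{i-1}$ coming from $\sum_{i=1}^k(3+a_{i-1})$ in (\ref{ineq:key4}). To land the coefficient $C_1\bar{\xi}\lambda_0D_h^2/\hat{\rho}^2$ inside the third summand of (\ref{complexity}), one needs in effect $\sum_{i=1}^ka_{i-1}\big/\sum_{i=1}^kA_i\le 4/k$. Your plan---upper-bound $a_{i-1}$ termwise and replace $\sum_{i=1}^k A_i$ by $k^3/12$---cannot deliver this with any estimate the paper provides: $a_{i-1}\le4i$ gives $\sum_{i=1}^k a_{i-1}\le 2k(k+1)$ and hence a coefficient of roughly $12\,C_1\bar{\xi}\lambda_0D_h^2/\hat{\rho}^2$ (a factor $12$ too large), and even the sharper bound $a_{i-1}\le i+\sqrt{A_0}$ extractable from the appendix still leaves a factor $3/2$. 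Your proposed remedy, ``near-tight control of $a_{i-1}\sim i/2$,'' is not an upper bound available anywhere: $i/2$ is the \emph{lower} estimate in Lemma \ref{estimates}, and proving $a_{i-1}\le i/2+o(i)$ would require a separate, finer analysis of the recursion. What the paper actually uses is the fourth inequality of Lemma \ref{estimates}, $\sum_{i=1}^ka_{i-1}/\sum_{i=1}^kA_i\le4/k$, which is proved by Cauchy--Schwarz together with the \emph{lower} bound $a_{i-1}\ge i/2$ and is applied to the ratio directly; that is, for this one term $\sum_{i=1}^k A_i$ is kept intact on the left-hand side rather than degraded to $k^3/12$. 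With that single substitution your argument goes through and yields exactly (\ref{complexity}); as written, it proves the theorem only with an inflated constant in the $\hat{\rho}^{-2}$ term.
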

\begin{proof}
	Using the facts that $ \{a_k\} $ is increasing, $a_0 = 4$, Lemma \ref{observation}(b), (c), and the last statement in Remark \ref{rem:Remark1}, we have for $k \geq 1$,
	\[
	\frac{1 + \tau_k}{\lambda_{k}} = \frac{1}{\lambda_{k}} + \frac{2 \xi_{k}}{a_{k-1}} \leq \frac{1}{\underline{\lambda}} + \frac{2 \bar{\xi}}{a_0} =  \frac{1}{\underline{\lambda}} + \frac{1}{2}\bar{\xi},
	\]
	and hence, together with (\ref{lipschtiz}), by \eqref{def:vk+1}, we obtain
	\[
	\min_{1 \leq i \leq k}\|v_i\| \le\min_{1 \leq i \leq k} \left( \frac{1 + \tau_i}{\lambda_{i}} + \overline{M} \right) \|y_{i}-\tx_i\| 
	\le \tilde C\min_{1 \leq i \leq k}\|y_{i}-\tx_i\|,
	\]
	where
	\[
	\tilde C= \frac{1}{\underline \lam}+\frac12\bar \xi+\overline{M}.
	\]
	Using the above inequality, (\ref{ineq:key4}) in Lemma \ref{lem:xi}, definition of $y^\ast$ and the first inequality in Lemma \ref{estimates}, we obtain for $k \geq 1$,
	\begin{align*}
	& \left( \frac{1 - \gamma}{2}\sum_{i=1}^{k} A_{i}\right) \min_{1 \leq i \leq k} \| v_i \|^2 \\ 
	&\le \tilde C^2 \left[
	\lam_{0} A_{0}(\phi(y_{0}) - \phi(y_{k}^{\rm{min}}))+ \left(\frac{1}{2} + 2\overline{\xi}\lambda_0n_0\right) D_h^2 +  2\bar{\xi} \lam_0 C^2\left(\sum_{i=1}^k \frac{i^2}{a_{i-1}} + n_0 k^2 \right) \right. \\
	& \left.  + \overline{\xi}\lambda_0 D_h^2 \sum_{i=1}^{k} (3 + a_{i-1}) \right] \\
	&\leq \tilde C^2 \left[\lam_0A_0(\phi(y_0)-\phi(y^\ast)) + \left(\frac{1}{2} + 2\overline{\xi}\lambda_0n_0\right) D_h^2 +  2\bar{\xi} \lam_0 C^2\left(\sum_{i=1}^k 2i + n_0 k^2 \right) \right. \\
	&  \left.  + \overline{\xi}\lambda_0 D_h^2 \left( 3k + \sum_{i=1}^{k} a_{i-1} \right) \right]. 
	\end{align*}
	Hence the complexity result \eqref{complexity} follows from the above inequality, the third and fourth inequality in Lemma \ref{estimates}.  The result (\ref{complexity2}) follows from (\ref{complexity}) and  $\bar{\xi}=0$ by (\ref{def:barxi}) where $\underline{m} = 0$ since $f$ is convex.
\end{proof}

%


\section{Conclusion}\label{sec:conclusion}

In this note, we propose a first order algorithm, VAR-FISTA, to solve composite optimization problems, and establish iteration complexity result for the convex and nonconvex case in Theorem \ref{thm:main} that are best known in the literature so far.  We remark that even though the iteration complexity for the convex case is better than that for the nonconvex case as shown in Theorem \ref{thm:main}, implementation\footnote{We do not provide numerical results that we obtained in this note.} of the algorithm shows that the number of iterations to obtain an $\hat{\rho}$-approximate solution for instances of the quadratic programming problem as found in \cite{Liang} is worse when the instance is convex than when it is nonconvex.  This phenomenon appears to occur as well in \cite{Liang} for other first order methods tested in the paper. We do not have a reasonable explanation for this unusual phenomenon.

\bibliographystyle{plain}
\bibliography{Proxacc_ref}


\appendix
\section{Appendix}\label{Appendix A}

\noindent {\bf{Proof of Lemma \ref{estimates}}}:  For $k \geq 1$, observe that
\begin{eqnarray*}
\frac{1}{2} + \sqrt{A_{k-1}} \leq a_{k-1} = \frac{1 + \sqrt{1 + 4A_{k-1}}}{2} \leq 2 \sqrt{A_{k-1}}.
\end{eqnarray*}
It follows that
\begin{align*}
& \left( \sqrt{A_{k-1}} + \frac{1}{2} \right)^2 \leq A_{k-1} + \sqrt{A_{k-1}} + \frac{1}{2} \leq A_k = A_{k-1} + a_{k-1} \\
& \leq A_{k-1} + 2 \sqrt{A_{k-1}} \leq (\sqrt{A_{k-1}} + 1)^2.
\end{align*}
Hence,
\begin{eqnarray*}
\sqrt{A_0} + \frac{k}{2} \leq \sqrt{A_{k-1}} + \frac{1}{2} \leq \sqrt{A_k} \leq \sqrt{A_{k-1}} + 1 \leq \sqrt{A_0} + k.
\end{eqnarray*}
Since $A_k = a_{k-1}^2$ and $A_0 = 12$, we conclude from the above that
\begin{eqnarray}\label{ineq:ak}
\frac{k}{2} \leq a_{k-1} \leq 4k.
\end{eqnarray}
Now, by $A_i = a_{i-1}^2$ and (\ref{ineq:ak}), we have
\begin{eqnarray*}
\sum_{i=1}^k A_i = \sum_{i=1}^{k} a_{i-1}^2 \geq \frac{1}{4} \sum_{i=1}^{k} i^2 = \frac{1}{24}k(k+1)(2k+1) \geq \frac{k^3}{12}.
\end{eqnarray*}
From $A_i = a_{i-1}^2$, $A_i = A_{i-1} + a_{i-1}$ and (\ref{ineq:ak}), we have
\begin{eqnarray*}
\frac{\sum_{i=1}^{k}a_{i-1}}{\sum_{i=1}^k A_i} = \frac{\sum_{i=1}^k a_{i-1}}{\sum_{i=1}^k a_{i-1}^2} \leq \frac{k \sum_{i=1}^k a_{i-1}}{\left( \sum_{i=1}^{k} a_{i-1} \right)^2} = \frac{k}{\sum_{i=1}^{k} a_{i-1}} = \frac{k}{A_k - A_0} \leq \frac{k}{a_{k-1}^2} \leq \frac{4}{k}.
\end{eqnarray*}
\hfill\hbox{\vrule width1.0ex height1.0ex}

\vspace{10pt}

\noindent {\bf{Proof of Proposition \ref{prop:technical1}}}: It is clear from the definition of $\tilde{\gamma}_k$ and $\gamma_k$ that they are $(\tau_k/\lambda_k)$-strongly convex. By (\ref{def:yk}), the way $y_k$ is defined in step $k3$ of VAR-FISTA and the definition of $\tilde{\gamma}_k$ in (\ref{def:tildegamma}), we see that $y_k$ is the optimal solution to the first minimization problem in (\ref{eq:minimizationequality}).  Since the objective function of this minimization problem is $((1 + \tau_k)/\lambda_k)$-strongly convex, it follows that $\forall\ u \in \Re^n$,
\begin{eqnarray}\label{ineq:stronglyconvex}
\tilde{\gamma}_k(y_k) + \frac{1}{2\lambda_k} \|y_k - \tilde{x}_k\|^2 + \frac{1 + \tau_k}{2\lambda_k} \|y_k - u\|^2 \leq \tilde{\gamma}_k(u) + \frac{1}{2\lambda_k} \|u - \tilde{x}_k \|^2.
\end{eqnarray}
On the other hand, the definition of $\gamma_k$ in (\ref{def:gamma}) and the relation
\begin{eqnarray*}
\| y_k - \tilde{x}_k \|^2 + \| y_k - u\|^2 = 2 \langle \tilde{x}_k - y_k, u - y_k \rangle + \| u - \tilde{x}_k \|^2
\end{eqnarray*}
imply that
\begin{eqnarray}\label{eq:tildegammagamma}
\tilde{\gamma}_k(y_k) + \frac{1}{2\lambda_k} \|y_k - \tilde{x}_k\|^2 + \frac{1 + \tau_k}{2\lambda_k} \|y_k - u\|^2 = \gamma_k(u) + \frac{1}{2\lambda_k}\| u - \tilde{x}_k \|^2.
\end{eqnarray}
Hence, comparing (\ref{ineq:stronglyconvex}) with (\ref{eq:tildegammagamma}), we have $\gamma_k(u) \leq \tilde{\gamma}_k(u) \ \forall \ u \in {\rm{dom}}\ h$, and from (\ref{eq:tildegammagamma}), we have $\tilde{\gamma}_k(y_k) = \gamma_k(y_k)$.  Furthermore, $\gamma_k(y_k) = \tilde{\gamma}_k(y_k)$, (\ref{eq:tildegammagamma}) and $\gamma_k \leq \tilde{\gamma}_k$ imply that
\begin{eqnarray*}
\gamma_k(y_k) + \frac{1}{2\lambda_k}\|y_k - \tilde{x}_k\|^2 & = & \tilde{\gamma}_k(y_k) + \frac{1}{2\lambda_k} \|y_k - \tilde{x}_k\|^2 \\
& \leq & \tilde{\gamma}_k(y_k) + \frac{1}{2\lambda_k} \|y_k - \tilde{x}_k\|^2 + \frac{1 + \tau_k}{2\lambda_k} \| y_k - u\|^2 \\
& = & \gamma_k(u) + \frac{1}{2\lambda_k} \|u - \tilde{x}_k \|^2 \leq \tilde{\gamma}_k(u) + \frac{1}{2\lambda_k} \| u - \tilde{x}_k \|^2
\end{eqnarray*}
for all $u \in \Re^n$, and hence the remaining conclusions of (a) follow.
%
%
%
\hfill\hbox{\vrule width1.0ex height1.0ex}
   
\vspace{10pt}

\noindent {\bf{Proof of Lemma \ref{lem:basicinequality}}}:  By Lemma \ref{observation}(b), (\ref{def:U}), the definition of $\tilde{\gamma}_k$ in (\ref{def:tildegamma}) and Proposition \ref{prop:technical1}, we have
\begin{eqnarray}
\lambda_k \phi(y_k) + \frac{1 - \gamma}{2} \| y_k - \tilde{x}_k \|^2 & \leq & \lambda_k \phi(y_k) + \frac{ 1 - U_k \lambda_k}{2} \| y_k - \tilde{x}_k \|^2 \nonumber \\
& = &  \lambda_k \tilde{\gamma}_k(y_k) + \frac{1}{2}(1 - \tau_k) \|y_k - \tilde{x}_k \|^2 \nonumber \\ 
& \leq & \lambda_k \gamma_k(y_k) + \frac{1}{2} \| y_k - \tilde{x}_k \|^2. \label{str:1}
\end{eqnarray}
Since $y_k$ is the optimal solution to the second minimization problem in (\ref{eq:minimizationequality}), by convexity of $\gamma_k$, (\ref{eq:all ak}) and (\ref{Akak}), the following holds for every $u \in \Omega$:
\begin{eqnarray}
& & A_k\left( \lambda_k \gamma_k (y_k) + \frac{1}{2} \|y_k - \tilde{x}_k \|^2 \right) \nonumber \\
& \leq & A_k \left( \lambda_k \gamma_k\left(\frac{A_{k-1} y_{k-1} + a_{k-1} x_k}{A_k} \right) + \frac{1}{2} \left\| \frac{A_{k-1} y_{k-1} + a_{k-1} x_k}{A_k} - \tilde{x}_k \right\|^2 \right) \nonumber \\
& \leq & \lambda_k A_{k-1} \gamma_k(y_{k-1}) + \lambda_k a_{k-1} \gamma_k(x_k) + \frac{A_k}{2} \left\| \frac{A_{k-1} y_{k-1} + a_{k-1} x_k}{A_k} - \tilde{x}_k \right\|^2 \nonumber \\
& = & \lambda_k A_{k-1} \gamma_k(y_{k-1}) + \lambda_k a_{k-1} \gamma_k(x_k) + \frac{1}{2} \| x_k - x_{k-1} \|^2 \nonumber \\
& \leq & \lambda_k A_{k-1} \gamma_k(y_{k-1}) + \lambda_k a_{k-1} \gamma_k(u) + \frac{1}{2} \|u - x_{k-1}\|^2 - \frac{a_{k-1}\tau_k + 1}{2} \| u - x_k \|^2, \label{str:2}
\end{eqnarray}
where the last inequality holds since $x_{k}$ is the optimal solution to the minimization problem (\ref{optimizationproblem}), and its objective function is $((a_{k-1} \tau_k + 1)/\lambda_k)$-strongly convex.  The result now follows by combining (\ref{str:1}) and (\ref{str:2}).
\hfill\hbox{\vrule width1.0ex height1.0ex}

%
%

\end{document}